\numberwithin{equation}{section}
\theoremstyle{plain}
\newtheorem{theorem}{Theorem}[section]
\newtheorem{lemma}[theorem]{Lemma}
\theoremstyle{definition}
\newtheorem{algorithm}[theorem]{Algorithm}
\theoremstyle{definition}
\newtheorem{example}[theorem]{Example}
\newcommand{\im}{\text{\normalfont{i}}}
\newcommand{\field}[1]{\mathbbm{#1}}
\newcommand{\fC}{\field{C}}
\newcommand{\fR}{\field{R}}
\newcommand{\fN}{\field{N}}
\newcommand{\abs}[1]{\left\lvert#1\right\rvert}
\newcommand{\acc}{\mathtt{acc}}
\newcommand{\bigO}[1]{\mathcal{O}\!\left(#1\right)}
\newcommand{\bigo}[1]{\mathcal{O}(#1)}
\newcommand{\Order}[1]{\bigO{n^{#1}}}
\newcommand{\ftime}[1]{\textit{#1}}
\newcommand{\ds}{\displaystyle}
\newcommand{\tablesize}{\footnotesize}
\DeclareMathOperator{\sgn}{\mathrm{sgn}}
\newcommand{\abpolter}[2]{\polter{a_{#1}}{b_{#2}}}
\newcommand{\abppolter}[2]{\polter{a_{#1}'}{b_{#2}'}}
\DeclareMathOperator*{\Ket}{\raisebox{-2pt}{\mbox{\Large $\mathbf K$}}}
\newcommand{\cfsum}[2]{\Ket_{#1}^{#2}}
\newcommand{\cftwosum}[3]{\Ket_{#1}^{#2}\left[\,#3\,\right]}
\newcommand{\cfinf}[1][n=1]{\Ket_{#1}^{\infty}}
\newcommand{\K}{\text{\textbf K}}
\newcommand{\mC}{\mathcal{C}}
\newcommand{\mD}{\mathcal{D}}
\newcommand{\wmD}{\widetilde{\mD}}
\newcommand{\mDe}{\mathcal{D}^{=}}
\newcommand{\mDn}{\mathcal{D}^{\ne}}
\title{On the convergence acceleration of some continued fractions%
\footnotetext{\emph{2010 Mathematics Subject Classification}: 65B99, 33F05}%
}
\author{%
	Rafa\l{} Nowak%
	\footnote{Institute of Computer Science, University of Wroc\l{}aw, ul. Joliot Curie 15, 50-383 Wroc\l{}aw, Poland,\newline
	e-mail: \href{mailto:rno@cs.uni.wroc.pl}{\texttt{rno@cs.uni.wroc.pl}}}%
}
\date{%
 	\svnInfoLongDate%
 	\footnote{Rev.\ \svnInfoRevision, \svnInfoLongDate, \svnInfoTime}%
}
\begin{document}
 \svnInfo $Id: convacc-tvcf.tex 2097 2012-03-04 06:26:48Z rno $
\maketitle
\begin{abstract}
	A~well known method for convergence acceleration of a~continued fraction $\K(a_n/b_n)$ is based on the use of modified approximants $S_n(\omega_n)$ in place of the classical ones $S_n(0)$, where $\omega_n$ are close to tails $f^{(n)}$ of the continued fraction. Recently (Numer. Algorithms 41 (2006), 297--317), the author proposed an~iterative method producing tail approximations whose asymptotic expansion's accuracy is improved in each step.
	This method can be applied to continued fractions $\K(a_n/b_n)$, where, for sufficiently large $n$, $a_n$ and $b_n$ are polynomials in $n$ ($\deg a_n=2$, $\deg b_n\leq 1$).
	The purpose of this paper is to extend this idea to the class of continued fractions $\K(a_n/b_n + a_n'/b_n')$, where $a_n$, $a_n'$, $b_n$, $b_n'$ are polynomials in $n$ ($\deg a_n=\deg a_n', \deg b_n=\deg b_n'$).
	We~give examples involving continued fraction expansions of some mathematical constants, as well as elementary and special functions.
\end{abstract}

\section{Introduction}
Many valued mathematical constants and elementary or special functions have continued fraction expansions. For example, the following continued fraction (CF, in short)
\begin{equation}
	\label{E:one}
	\arctan x = \polter{x}{1} + \cfsum{n=1}{\infty}\polter{(2n-1)^2x^2}{2n+1-(2n-1)x^2}
	\qquad (-1\leq x\leq 1)
\end{equation}
is very useful expansion of the well-known inverse trigonometric function. If, however, \mbox{$x\approx 1$}, 
then it converges very slowly and application of some methods of convergence acceleration is required (see \cite[p. 19, Eq.~(11)]{Perron}).
If $x\neq 1$, then CF~\eqref{E:one} can be transformed equivalently into limit $1$-periodic CF of loxodromic type, and one can use the \ftime{improvement machine} --- the method established by Lorentzen and Waadeland (see \cite{Lorentzen95}, \cite[p.~227]{Lorentzen08}). But if $x=1$, then~\eqref{E:one} is of elliptic type and, according to \cite{Lorentzen95}, one can use its even part and transform it to the CF of parabolic type, of the form $\K(c_n/1)$, where $c_n+\tfrac14=\tfrac{3}{64}n^{-4}+\bigO{n^{-5}}$. Since $c_n+\tfrac{1}{4}=\bigO{n^{-4}}$, one can use modified approximants $S_n(-1/2)$ --- the \textit{fixed point method} --- in order to accelerate the convergence of~$\K(c_n/1)$ (see \cite[p.~219]{Lorentzen08}). However, if $c_n+\tfrac{1}{4}=\bigO{n^{-2}}$, then such a~method usually does not succeed. One can verify that CF~\eqref{E:one} can be effectively computed using the method introduced in \cite{Nowak06}, where the convergence acceleration of $\text{CF}\in \mC_{20}\cup\mC_{21}$ was proposed; $\
mC_{kl}$ means the class of CFs of the form $\K(a_n/b_n)$ with $a_n$ and $b_n$ being polynomials in $n$ ($\deg a_n=k$, $\deg b_n=l$). In~this paper, we extend this result to the class $\mD_{kl}$ of so-called \ftime{two-variant} CFs (see~\cite{Paszkowski03}) of the form
\begin{equation}\label{etvcfrac}
	b_0'+\cftwosum{n=0}{\infty}{\abpolter nn+\abppolter nn},
\end{equation}
where $a_{n}, a_{n}'$ and $b_{n}, b_{n}'$ are polynomials in $n$, such that
$\deg a_{n}=\deg a_{n}'=k$, $\deg b_{n}=\deg b_{n}'=l$.
One can find a~lot of examples of such CFs in the Perron's book~\cite{Perron}. More CF expansions can be found in (chronological order): \cite{Wall48}, \cite{Lorentzen08} and \cite{Cuyt08}. The authors of \cite{Cuyt08} have developed the website which shows the applications of continued fractions to approximation of many special functions (see~\url{http://www.cfhblive.ua.ac.be/}).

For example, the following CF expansion holds
\begin{equation}\label{E:cn}
	\int_{0}^{\infty}e^{-tx}\mathrm{cn}(t;\,k)\,dt= \polter{1}{x}+\cftwosum{n=1}{\infty}{\polter{(2n-1)^2}{x}+\polter{(2n)^2 k^2}{x}}
	\qquad (0<k<1,\; x>0),
\end{equation}
where $\mathrm{cn}(t;\,k)$ is the Jacobi elliptic function, very often applied in electro- and magnetostatics, fluid dynamics, as well as nonlinear integrable equations (see~\cite[p.~220, Eq.~(15)]{Perron} or \cite[p.~283]{Lorentzen08}).
Although the equation \eqref{E:cn} is known for almost 100 years (the first edition of Perron's book \cite{Perron} was issued in 1913), current computer algebra systems do not use it. For instance, the Maple system is not able to compute the integral in equation \eqref{E:cn} with $x = 0.8$ and $k = 0.9$; if we replace the upper limit $\infty$ by $500$, then the correct result is obtained after $1.5$ minutes of waiting.  Thus, the problems which require computing a~lot of such integrals are almost insolvable using the current mathematical software like Maple or Mathematica. The motivation for using aforementioned techniques of convergence acceleration is, among others, the fact that CF expansion \eqref{E:cn} allows us to compute such integrals in a split of a~second.

Our main result is a~new method for convergence acceleration of two-variant CFs.
Below, we give some notation used in this paper.
In Section~\ref{s:main}, we describe the class $\mD$ of considered CFs (see~\eqref{D}).
We analyse tails of CFs and try to find out their asymptotic form. Our investigations require to study many cases of CFs' subclasses. For the sake of clarity, we moved some technical results to \ref{apa}.
We show that derivation of the coefficients of~asymptotic expansion of tails is quite costly.
Thus, we propose an~iterative method that improves the accuracy of tails'
approximations.
In Section~\ref{s:method}, we describe our method in detail. The main result is in Theorem~\ref{T:convacc}.
Next, in Section~\ref{s:alg} we use this result repeatedly in order to obtain the algorithm, which is of iterative character, producing tail approximations whose asymptotic expansion's accuracy is improved in each step. This idea is summarized in the Algorithm~\ref{alg}.
Finally, in Section~\ref{s:num}, we report the results of experiments involving some CFs belonging to the class $\mD$.

\subsection*{Some notation}
Let $S_n(\omega)$ denote the \ftime{$n$-th modified approximant} of CF~\eqref{etvcfrac}; see \cite[pp.~7, 56]{Lorentzen92} or~\cite[(1.2.1), p.~5]{Lorentzen08} for more details.
Clearly, we have
\begin{align*}
	S_0(\omega)&=b_0'+\omega,\\%
	S_{2n-1}(\omega) &= b_0'+\cftwosum{k=1}{n-1}{\polter{a_k}{b_k}+\polter{a_k'}{b_k'}}+\polter{a_{n}}{b_n+\omega},& &n\in\fN,\\
	S_{2n}(\omega)   &= b_0'+\cftwosum{k=1}{n-1}{\polter{a_k}{b_k}+\polter{a_k'}{b_k'}}+\polter{a_{n}}{b_n}+\polter{a_{n}'}{b_n'+\omega},& &n\in\fN.
\end{align*}
We use the symbol $f^{(n)}$ to denote the \ftime{$n$-th tail}, i.e.,
\begin{align*}
	f^{(2n-1)} &= \polter{a_n'}{b_n'} + \cftwosum{k=n+1}{\infty}{\polter{a_k}{b_k}+\polter{a_k'}{b_k'}},& &n\in\fN,\\
	f^{(2n)}   &=                       \cftwosum{k=n+1}{\infty}{\polter{a_k}{b_k}+\polter{a_k'}{b_k'}},& &n\in\fN\cup\{0\}.
\end{align*}

\section{Asymptotics of tails}\label{s:main}
Our method for convergence acceleration is based on the obvious fact: if CF is
convergent to $V$, then all its tails are also convergent, and
\begin{equation}\label{ematail}
	S_{n}(f^{(n)}) = V.
\end{equation}
Since $S_{n}(\omega)$ is the linear fractional transformation in $\omega$, it is of great importance to find the best approximation $\omega_n$ of the tail $f^{(n)}$. Then the modified approximant $S_n(\omega_n)$ may be a~very good approximation of $V$.

In Subsection~\ref{s:class}, we describe the class $\mD$ of considered CFs.
Taking into account their `two-variant' character (cf.~\eqref{etvcfrac}), we deal with odd tails only, \[ u_n\coloneqq f^{(2n-1)}.\]
One can verify that tails $u_n$ of CF~\eqref{etvcfrac} satisfy the following recurrence relation:
\begin{equation}\label{urec}
	u_n=\cfrac{a_n'}{b_n'+\cfrac{a_{n+1}}{b_{n+1}+u_{n+1}}} \qquad (n=1,2,\ldots).
\end{equation}
In Subsection~\ref{astail}, we analyse the behaviour of tails $u_n$ and state their asymptotic expansion (see~\eqref{eunexp}). Also, we give formulae for its leading coefficients (see Theorem~\ref{tmu}). Let us remark that derivation of them requires considering all the subclasses in $\mD$, separately. Thus, we move part of calculations to \ref{apa}.

\subsection{Class $\mD$}\label{s:class}
We consider some subclasses of CFs of the form \eqref{etvcfrac}, where
\begin{gather}\label{easab}
\begin{aligned}
	a_{n+1} &= p_{-2}n^2+p_{-1}n+p_0+\ldots,\qquad   & a_{n}' &= p_{-2}'n^2+p_{-1}'n+p_0'+\ldots,\\
	b_{n+1} &= q_{-2}n^2+q_{-1}n+q_0+\ldots,   & b_{n}' &= q_{-2}'n^2+q_{-1}'n+q_0+\ldots.
\end{aligned}
\end{gather}
We say that CF \eqref{etvcfrac} is of class $\mD_{kl}$ if $p_k, p_k', q_l, q_l'$ are leading coefficients in the above expansions. In this paper, we consider only $k\in\{1,2\}$ and $l\in\{0,1\}$, since any CF of class $\mD_{k+2, l+1}$ is equivalent to some of the class $\mD_{kl}$.

Let us fix some notation related to CFs' subclasses considered here.
In this paper, we consider the CFs belonging to
\begin{equation}\label{D}
	\mD\coloneqq \mDe_{10}\cup\mDn_{10}\cup\mD_{11}\cup\mDe_{20}\cup\mDn_{20}\cup\wmD_{21};
\end{equation}
see Table~\ref{Tab:Rozwazane_podklasy}, where the characterization of all the subclasses is~given.
\begin{table}[htb]
\begin{center}
	\caption{Considered subclasses of CFs}
	\label{Tab:Rozwazane_podklasy}
	\begin{tabular}{c|c}
		Subclass symbol & Characterization \\\hline
		$\mDe_{10}\subset\mD_{10}$ &
		\parbox[m]{.6\textwidth}{\vskip -8pt%
		\begin{gather*}
			p_{-1}=p'_{-1}
			\qquad\text{and}\qquad
			\frac{q_0\,q'_0}{p_{-1}} \notin \left(-\infty,0\right]
		\end{gather*}\vskip -5pt%
		} \\\hline
		$\mDn_{10}\subset\mD_{10}$ &
		\parbox[m]{.6\textwidth}{\vskip -8pt%
		\begin{gather*}
			\abs{p_{-1}'} \neq\abs{p_{-1}}
		\end{gather*}\vskip -5pt%
		} \\\hline
		$\mDe_{20}\subset\mD_{20}$ &
		\parbox[m]{.7\textwidth}{
		\[
			p_{-2}=p'_{-2}
			\quad\text{and}\quad
			\frac{\beta^2-4\alpha\gamma}{p_{-2}^2} \notin\left(-\infty,0\right],
		\]
		where $\alpha=q'_{0},\;\beta=p_{-1}-p'_{-1}-p_{-2},\;\gamma=-p_{-2}\,q_{0}$
		}
		\\\hline
		$\mDn_{20}\subset\mD_{20}$ &
		\parbox[m]{.6\textwidth}{\vskip -8pt%
		\begin{gather*}
			\abs{p_{-2}'} \neq\abs{p_{-2}}
		\end{gather*}\vskip -5pt%
		} \\\hline
		$\wmD_{21}\subset\mD_{21}$  &
		The roots $x_0$, $x_1$ of equation
		\(
			x = \frac{p_{-2}'}{q_{-1}'+\tfrac{p_{-2}}{q_{-1}+x}}
		\)
		satisfy
		\(
			\abs{\tfrac{p_{-2}'}{q_{-1}'}-x_0} \neq \abs{\tfrac{p_{-2}'}{q_{-1}'}-x_1}
		\)
	\end{tabular}
\end{center}
\end{table}

\subsection{Asymptotic expansion of tails}\label{astail}
It follows from~\eqref{urec} that tails $u_n$ satisfy the equation
(see~also~\cite[(1.6)]{Paszkowski03}):
\begin{equation}\label{oddtailsrel}
	\left(b_n' b_{n+1} + a_{n+1}\right)  X_n + b_n' X_n X_{n+1}  - a_n' X_{n+1} - a_n' b_{n+1} = 0.
\end{equation}
Now, we analyse the asymptotic behaviour of its solutions. 
Observe that recurrence relation~\eqref{oddtailsrel} is bilinear and all its
coefficients have the asymptotic expansion in powers of~$n^{-1}$
(cf.~\eqref{easab}).
Therefore, its solutions have the asymptotic expansion in
powers f $n^{-1/2}$. Moreover, this expansion is at most $\bigo{n^{2}}$:
\begin{equation}\label{eXnexp}
	X_n = \tau_{-4} \, n^2 + \tau_{-3}\, n^{3/2} + \tau_{-2}\, n + \tau_{-1}\, n^{1/2} + \tau_0 + \tau_1\, n^{-1/2} + \bigo{n^{-1}}.
\end{equation}
More precisely, equation \eqref{oddtailsrel} has two solutions $X_n', X_n''$ of
the form \eqref{eXnexp}. According to Paszkowski~\cite{Paszkowski03}, the
sequence $u_n$ is a~solution $X_n\in\{X_n', X_n''\}$, such that $\{X_{n}
a_{n+1} / (b_{n+1} + X_{n+1})\}$ is asymptotically smaller in~modulus.
It means that the problem in solving the equation~\eqref{oddtailsrel} reduces
to choosing suitable coefficients $\tau_j$, such that
\begin{equation}\label{eunexp}
	u_n = \tau_{-\mu} \, n^{\mu/2} + \tau_{1-\mu}\, n^{\mu/2-1/2} + \tau_{2-\mu}\, n^{\mu/2-1} + \tau_{3-\mu}\, n^{\mu/2-3/2} + \ldots.
\end{equation}
Of course, knowing the coefficients $\tau_j$, we can give good approximations of the tails $u_{n}$, and good approximations of CF value, as well (cf.~\eqref{ematail}).
In this Subsection, we give formulae for some portion of the coefficients
$\tau_{-\mu}, \tau_{1-\mu}, \ldots$, in the case of any subclass in $\mD$, named
in Subsection~\ref{s:class}.

First, we put the value $\mu\leq 4$ (cf.~\eqref{eXnexp} and~\eqref{eunexp}).
From~\eqref{easab}, it follows easily that $\mu\leq 2$ if
$\text{CF}\notin\mD_{20}$.
In the sequel, we allow to be $\tau_{-\mu}=0$. We will show that this may happen only in the case of $\mDn_{10}$ or $\mD_{11}$.
Therefore, we call $\tau_{-\mu}$ the \ftime{beginning coefficient} (not the leading one) of expansion \eqref{eunexp}.
According to Lemma \ref{ld1020}, we choose
\begin{equation*}
	\mu\coloneqq
	\begin{cases}
		1, & \text{if $\text{CF}\in\mDe_{10}$},\\
		4, & \text{if $\text{CF}\in\mDe_{20}$},\\
		2 & \text{otherwise,}
	\end{cases}
\end{equation*}
and assume that $\tau_{-\nu}=0$ if $\nu>\mu$. The proof of Lemma~\ref{ld1020} is rather technical and requires considering all the subclasses in~$\mD$ separately.

Second, let us look at the beginning coefficient $\tau_{-\mu}$. Our main
results are as follows. Lemma \ref{lmueq} shows that $\tau_{-\mu}$ satisfies
quadratic equation $\alpha\tau^2+\beta\tau+\gamma=0$, while Theorem~\ref{tmu}
specifies which root it is, where $\alpha$, $\beta$ and $\gamma$ are given
in~Table~\ref{tab:alpha}.
Also, we give there simple formulae for $\tau_{-\mu}$; in Lemmata~\ref{ld1020},
\ref{ld20} one can find the formulae for the next coefficients of the
expansion~\eqref{eunexp}.
\begin{table}[htb]
\centering
	\caption{Coefficients $\alpha, \beta, \gamma$}
	\label{tab:alpha}
	\begin{tabular}{c|ccc}
		Subclass & $\alpha$ & $\beta$ & $\gamma$ \\\hline
		$\mDe_{10}$ & $q_0'$ & $0$              & $-q_0\, p_{-1}$ \\
		$\mDn_{10}$ & $q_0'$ & $p_{-1}-p'_{-1}$ & $0$ \\
		$\mD_{11}$  & $q_{-1}'$ & $q_{-1}\,q'_{-1}$ & $0$ \\
		$\mDe_{20}$ & $q'_0$ & $p_{-1}-p'_{-1}-p_{-2}$ & $-q_0\, p_{-2}$ \\
		$\mDn_{20}$ & $q_0'$ & $p_{-2}-p'_{-2}$ & $0$ \\
		$\wmD_{21}$ & $q_{-1}'$ & $p_{-2}-p'_{-2}+q_{-1}\,q'_{-1}$ & $ -q_{-1}\,p'_{-2}$
	\end{tabular}
\end{table}
\newpage
\begin{theorem}\label{tmu}
	The beginning coefficient $\tau_{-\mu}$ is given explicitly in the following way.
	\begin{enumerate}
	\item In the case of $\mDe_{10}$ $(\mu=1)$:
		\begin{equation}\label{E:De10:tau_-1}
			\tau_{-1}\coloneqq\sgn\left(\Re\frac{q'_0\sqrt{\frac{q_0\,p_{-1}}{q'_0}}}{p_{-1}}\right) \sqrt{\frac{q_0\,p_{-1}}{q'_0}};
		\end{equation}
	\item In the case of $\mDn_{10}$ $(\mu=2)$:
		\begin{equation*}
			\tau_{-2}\coloneqq\begin{dcases} 0 & \text{if $\abs{p_{-1}'}<\abs{p_{-1}}$}, \\ \frac{p'_{-1}-p_{-1}}{q_0'} & \text{if $\abs{p_{-1}'}>\abs{p_{-1}}$}; \end{dcases}
		\end{equation*}
	\item In the case of $\mD_{11}$ $(\mu=2)$:
		\begin{equation}
			\label{E:D11:tau_-2}
			\tau_{-2} \coloneqq 0;
		\end{equation}
	\item In the case of $\mDe_{20}$ $(\mu=2)$:
		\begin{equation}
			\label{E:De20:tau_-2}
			\tau_{-2}\coloneqq\dfrac{-\beta+\sgn\left(\Re\frac{\sqrt{\beta^2-4\alpha\gamma}}{p_{-2}}\right)\sqrt{\beta^2-4\alpha\gamma}}{2\alpha},
		\end{equation}
		where $\alpha,\beta$ and $\gamma$ are given in Table~\ref{tab:alpha};
	\item In the case of $\mDn_{20}$ $(\mu=4)$:
		\begin{equation*}
			\tau_{-4}\coloneqq\begin{dcases} 0 & \text{if $\abs{p_{-2}'}<\abs{p_{-2}}$}, \\ \frac{p'_{-2}-p_{-2}}{q_0'} & \text{if $\abs{p_{-2}'}>\abs{p_{-2}}$}; \end{dcases}
		\end{equation*}
	\item\label{item:D21:tau}
		In the case of $\wmD_{21}$  $(\mu=2)$: we choose $\tau_{-2}$
		as the first of the roots $\{x_0, x_1\}$ of equation \eqref{etaueq} if
		\begin{equation}
			\label{E:D21:tau_-2}
			\abs{\frac{p_{-2}'}{q_{-1}'}-x_0} < \abs{\frac{p_{-2}'}{q_{-1}'}-x_1}.
		\end{equation}
	\end{enumerate}
\end{theorem}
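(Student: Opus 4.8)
The plan is to convert the Paszkowski minimality criterion into one uniform magnitude comparison, and then to extract the required sign in each subclass. Set $Q_n := X_n\,a_{n+1}/(b_{n+1}+X_{n+1})$, the quantity whose modulus must be asymptotically smallest for the tail. Solving the bilinear relation~\eqref{oddtailsrel} for $a_{n+1}X_n$ gives
\[
  a_{n+1}X_n=(b_{n+1}+X_{n+1})\,(a_n'-b_n'X_n),
\]
so that \emph{every} solution $X_n$ of~\eqref{oddtailsrel} obeys the clean identity $Q_n=a_n'-b_n'X_n$. By Lemma~\ref{lmueq} the beginning coefficient is one of the two roots $\tau^{(1)},\tau^{(2)}$ of $\alpha\tau^2+\beta\tau+\gamma=0$; substituting $X_n\sim\tau\,n^{\mu/2}$ together with the leading terms of $a_n',b_n'$ from~\eqref{easab} into $Q_n=a_n'-b_n'X_n$ reduces the whole theorem to deciding, in each subclass, which root makes $\abs{Q_n}$ asymptotically smaller.

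First I would dispose of the subclasses in which the two roots already separate at leading order. For $\mDn_{10}$ and $\mDn_{20}$ one root is $0$ and the other is $(p_{-1}'-p_{-1})/q_0'$ (respectively $(p_{-2}'-p_{-2})/q_0'$), and the identity gives $\abs{Q_n}\sim\abs{p_{-1}'}\,n$ for the zero root and $\abs{Q_n}\sim\abs{p_{-1}}\,n$ for the other (and the analogue with $p_{-2}$ for $\mDn_{20}$); hence the comparison of $\abs{p_{-1}'}$ with $\abs{p_{-1}}$ (respectively $\abs{p_{-2}'}$ with $\abs{p_{-2}}$) picks out the smaller one, exactly as stated. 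In $\mD_{11}$ the root $\tau=0$ yields $Q_n\sim p_{-1}'n$ while $\tau=-q_{-1}$ yields $Q_n\sim q_{-1}q_{-1}'n^2$, so the $\bigo{n}$ solution always wins and $\tau_{-2}=0$. In $\wmD_{21}$ both roots give $Q_n\sim(p_{-2}'-q_{-1}'\tau)\,n^2=q_{-1}'\bigl(\tfrac{p_{-2}'}{q_{-1}'}-\tau\bigr)n^2$, so minimality of $\abs{Q_n}$ is precisely the inequality~\eqref{E:D21:tau_-2}, and the defining condition of $\wmD_{21}$ guarantees a strict winner.

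The hard part will be the two `equal' subclasses $\mDe_{10}$ and $\mDe_{20}$, where $p_{-1}=p_{-1}'$ (respectively $p_{-2}=p_{-2}'$) makes both roots give the \emph{same} leading term in $Q_n$, so the decision is forced down to the next order. Writing $Q_n\sim A\,n^{d}+C_{\pm}\,n^{d-1/2}$ for $\mDe_{10}$ (and $A\,n^{d}+C_{\pm}\,n^{d-1}$ for $\mDe_{20}$), where $A=p_{-1}'$ (respectively $p_{-2}'$) is common to both roots and the branch $\pm\sqrt{\beta^2-4\alpha\gamma}$ of the quadratic sits inside $C_{\pm}$, I would expand
\[
  \abs{Q_n}^2=\abs{A}^2n^{2d}+2\,\Re\!\bigl(A\,\overline{C_{\pm}}\bigr)\,n^{2d-1/2}+\ldots
\]
(the cross exponent being $2d-1$ for $\mDe_{20}$) and observe that the smaller modulus corresponds to the root that minimises $\Re\bigl(A\,\overline{C_{\pm}}\bigr)$. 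A short computation then collapses this selection to $\sgn\bigl(\Re(q_0'\sqrt{q_0p_{-1}/q_0'}/p_{-1})\bigr)$ for $\mDe_{10}$ and to $\sgn\bigl(\Re(\sqrt{\beta^2-4\alpha\gamma}/p_{-2})\bigr)$ for $\mDe_{20}$, which are exactly the sign factors in~\eqref{E:De10:tau_-1} and~\eqref{E:De20:tau_-2}. The subclass conditions $q_0q_0'/p_{-1}\notin(-\infty,0]$ and $(\beta^2-4\alpha\gamma)/p_{-2}^2\notin(-\infty,0]$ are precisely what keeps that real part nonzero, so the selected root is unambiguous; assembling the six subclasses finishes the proof.
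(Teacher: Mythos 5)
Your proposal follows the same skeleton as the paper's proof: both invoke Paszkowski's criterion that the tail sequence is the solution of \eqref{oddtailsrel} whose product $I_n = a_{n+1}u_n/(b_{n+1}+u_{n+1}) = f^{(2n-1)}f^{(2n)}$ is asymptotically smaller in modulus, both then run a subclass-by-subclass comparison of the two candidate roots from Lemma~\ref{lmueq}, and in the two `equal' subclasses both use the expansion of $\abs{z_n}^2$ together with the fact $\sgn\Re(a/b)=\sgn(\Re a\Re b+\Im a\Im b)$ to turn the second-order comparison into the stated sign factors. Where you genuinely depart from the paper is the identity $Q_n=a_n'-b_n'X_n$, valid for every solution of the bilinear recurrence: rearranging \eqref{oddtailsrel} indeed gives $a_{n+1}X_n=(b_{n+1}+X_{n+1})(a_n'-b_n'X_n)$, so the identity is correct, whereas the paper computes the asymptotics of $I_n$ from its definition. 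This buys a real simplification in part 6: you read off $Q_n\sim q_{-1}'\bigl(\tfrac{p_{-2}'}{q_{-1}'}-\tau\bigr)n^2$ directly, so the selection criterion \eqref{E:D21:tau_-2} is immediate, while the paper obtains the leading coefficient $\tau_{-2}\,p_{-2}/(\tau_{-2}+q_{-1})$ and then needs a chain of manipulations with Vieta's formula $x_0x_1=-q_{-1}p_{-2}'/q_{-1}'$ to match it against \eqref{E:D21:tau_-2}; the two are consistent, since the quadratic \eqref{E:D21:m=-6} factors as $(q_{-1}'\tau-p_{-2}')(\tau+q_{-1})=-p_{-2}\tau$. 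Your treatment of $\mDe_{10}$ and $\mDe_{20}$ also checks out and reproduces exactly the sign factors in \eqref{E:De10:tau_-1} and \eqref{E:De20:tau_-2}, with the subclass conditions guaranteeing the relevant real parts are nonzero.

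One intermediate claim is wrong, though it does not affect the conclusion: in $\mD_{11}$ the zero root does \emph{not} give $Q_n\sim p_{-1}'n$. When $\tau_{-2}=0$ the tail tends to $\tau_0=p_{-1}'/q_{-1}'$ (Lemma~\ref{ld1020}), and since $b_n'\sim q_{-1}'n$ is unbounded in this subclass, the leading terms of $a_n'$ and $b_n'X_n$ cancel exactly, leaving $Q_n=\bigO{1}$ (compare the paper's \eqref{E:D11:iloczyn}, whose constant is $p_{-1}'p_{-1}/(q_{-1}'q_{-1})$). This exposes the general caveat in your reduction: substituting only the beginning term $\tau\,n^{\mu/2}$ into $Q_n=a_n'-b_n'X_n$ is not legitimate when the beginning coefficient vanishes and $b_n'$ grows; you then need the subsequent coefficients from Lemma~\ref{ld1020}, exactly as the paper does. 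Here the slip is harmless --- $\bigO{1}$ is still asymptotically smaller than the other root's $q_{-1}q_{-1}'n^2$, so $\tau_{-2}=0$ remains the correct choice --- but the step should be stated and justified correctly.
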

\begin{proof}
According to Paszkowski~\cite{Paszkowski03}, the sequence of odd tails
$f^{(2n-1)}$ of CF \eqref{etvcfrac} is such a~solution of recurrence equation
\eqref{oddtailsrel}, for which the product $f^{(2n-1)} f^{(2n)}$ is
asymptotically smaller in~modulus.

Before we start, let us observe that setting \[I_n \coloneqq f^{(2n-1)}
f^{(2n)}=\frac{a_{n+1}\,u_n}{b_{n+1}+u_{n+1}}\] yields, after some simple
calculations that use only Lemmata~\ref{ld1020} and~\ref{lmueq}:
	\begin{enumerate}
	\item\label{it:De10} In the case of $\mDe_{10}$:
		\begin{equation}
 			\label{E:De10:iloczyn}
			I_n= p_{-1}\,n - \frac{p_{-1} q_0}{\tau_{-1}} \sqrt{n} + \bigO{1};
		\end{equation}%
	\item\label{it:Dn10} In the case of $\mDn_{10}$:
		\begin{equation}\label{E:Dn10:iloczyn}
			I_n=
			\begin{dcases}
				p_{-1}'\,n+\bigO{1} & \text{if $\tau_{-2}=0$,}\\
				p_{-1} \,n+\bigO{1} & \text{otherwise;}
			\end{dcases}
		\end{equation}
	\item\label{it:D11} In the case of $\mD_{11}$:
		\begin{equation}\label{E:D11:iloczyn}
		I_n=
		\begin{dcases}
			\frac{p_{-1}'\, p_{-1}}{q_{-1}'\, q_{-1}} + \bigo{n^{-1/2}} & \text{if $\tau_{-2}=0$,}\\
			q_{-1}'\,q_{-1}\,n^2+\bigO{n}                               & \text{otherwise;}
		\end{dcases}
		\end{equation}
	\item\label{it:De20} In the case of  $\mDe_{20}$:
		\begin{equation}\label{E:De20:iloczyn}
			I_n = p_{-2}\, n^2 + \left(p_{-1}-p_{-2}-\frac{p_{-2}\,q_0}{\tau_{-2}}\right) n + \bigO{1};
		\end{equation}%
	\item\label{it:Dn20} In the case of  $\mDn_{20}$:
		\begin{equation}\label{E:Dn20:iloczyn}
			I_n =
			\begin{dcases}
			p_{-2}'\,n^2 +\bigO{n} & \text{if $\tau_{-4}=0$,}\\
			p_{-2} \,n^2 +\bigO{n} & \text{otherwise;}
			\end{dcases}
		\end{equation}
	\item\label{it:D21} In the case of  $\wmD_{21}$
		\begin{equation}\label{E:D21:iloczyn}
				I_n = \frac{\tau_{-2}\,p_{-2}}{\tau_{-2}+q_{-1}}\,n^2 +\bigo{n^{3/2}}.
		\end{equation}
	\end{enumerate}
Let us observe that the last formula is well-defined. Otherwise, assuming
$\tau_{-2}=-q_{-1}$ and using \eqref{E:D21:m=-6}, we derive $p_{-2}q_{-1}=0$,
which is a~contradiction.
The formulae \eqref{E:Dn10:iloczyn}, \eqref{E:D11:iloczyn} and~\eqref{E:Dn20:iloczyn} easily imply the parts 2, 3 and 5 of the thesis.
To complete the proof of the parts 1 and 4, it is enough to make use of the two
obvious facts.
First, if $z_n=an^\kappa+bn^\lambda+\bigO{n^{\nu}}$ for any $\kappa>\lambda>\nu$, then
\[
	\abs{z_n}^2 = \abs a n^{2\kappa} + 2(\Re a\Re b+\Im a\Im b)n^{\kappa+\lambda}+\bigo{n^{\max\{2\lambda,\kappa+\nu\}}}.
\]
Second, $\sgn \Re \frac ab = \sgn (\Re a\Re b+\Im a\Im b)$ holds for any $a, b\in\fC$ $(b\neq0)$.
In order to complete the proof of the part 6, let us observe that
inequality~\eqref{E:D21:tau_-2} is~equivalent to
\[
	\abs{x_0\, q_{-1}' - p_{-2}'}<\abs{x_1\, q_{-1}' - p_{-2}'}.
\]
By multiplying both sides by $|{p_{-2}\,q_{-1}}/{q_{-1}'}|$, we obtain
	\[
		\abs{-\frac{q_{-1}\,p_{-2}'}{q_{-1}'}p_{-2} + x_0\,p_{-2}\,q_{-1}}<\abs{-\frac{q_{-1}\,p_{-2}'}{q_{-1}'}p_{-2} + x_1\,p_{-2}\,q_{-1}}.
	\]
Now, let us note that $x_0 x_1 = -\frac{q_{-1}\,p_{-2}'}{q_{-1}'}$
(equal to the~quotient $\gamma/\alpha$ in the case $\wmD_{21}$; see
Table~\ref{tab:alpha}).
Hence, we have
	\(
		\abs{x_0\,x_1\,p_{-2} + x_0\,q_{-1}\,p_{-2}} < \abs{x_0\,x_1\,p_{-2} + x_1\,q_{-1}\,p_{-2}},
	\)
and thus
	\[
		\abs{\frac{x_0\,p_{-2}}{x_0+q_{-1}}} < \abs{\frac{x_1\,p_{-2}}{x_1+q_{-1}}}.
	\]
Therefore, the choice $\tau_{-2}\coloneqq x_0$ implies that $I_n$ has
asymptotically smaller absolute value (cf.~\eqref{E:D21:iloczyn}).
\end{proof}

\section{Method}\label{s:method}
In this Section, we present the method for convergence acceleration.
We consider only CFs of subclasses in~$\mD$, given in~\eqref{D}. However, in
general, our idea can be applied to any CF whose sequence of odd (or even) tails
$u_n$ has asymptotic expansion of the form~\eqref{eunexp}. Then the only
problem is to derive the initial approximations of the tails $u_n$.

The main objective is to improve, in each step, the accuracy of approximations
of tails $u_n$. Of course, one can use the technique of Subsection~\ref{astail}
to obtain a~suitable number of coefficients $\tau_j$ in
the~expansion~\eqref{eunexp}. This approach was discovered more than
half-century ago by Wynn~in~\cite{Wynn59}; see also~\cite[p.~232]{Lorentzen08}.
Let us note that in such a~case, each coefficient $\tau_j$ can be expressed in
terms of all the previous coefficients $\tau_{j-1}$, $\tau_{j-2}$, \ldots,
$\tau_{-\mu}$, and hence the computation of them explicitly is quite costly.
For example, in the case of CF$\in\mC_{20}\cup\mC_{21}$, the
recurrence formula for the sequence of tails of CF was given
in~\cite[Thm.~2.6]{Nowak06}.

Our goal is to obtain better and better approximations of the tails
\eqref{eunexp} without computing the coefficients $\tau_j$ explicitly.
We will construct approximations $u_n^{(m)}$ of tails $u_n$, such that
$u_n^{(m)}$ are of increasing accuracy, for consecutive values of $m$, in the
sense that:
\[ (u_n^{(m+1)}-u_n)/(u_n^{(m)}-u_n) \to 0 \qquad \text{when}\qquad
	n\to\infty.
\]
More precisely, if $u_n^{(m)}-u_n=\mathcal{O}({n^{-m/2}})$, then $u_n^{(m+1)}-u_n=\mathcal{O}({n^{-m/2-\theta}})$, where $\theta>0$ is given in~\eqref{theta}.
Our method is of the iterative character, since we express $u_{n}^{(m+1)}$ in terms of $u_{n}^{(m)}$ and $u_{n+1}^{(m)}$.
Similar concept was given by Lorentzen and Waadeland, and called `improvement
machine' (see \cite[p.~227]{Lorentzen08} or~\cite[\S6]{Lorentzen95}, for more
details). However, their method cannot be applied to all the subclasses in $\mD$
and, as numerical examples show, is less efficient than ours.

The following technique is very similar to the one presented in~\cite{Nowak06}, but is more subtle, since we deal with more subclasses of CFs and consider only their odd tails, whose asymptotic expansion can have powers of $n^{-1/2}$.
However, from Lemmata~\ref{ld1020} and~\ref{ld20}, it follows that tails $u_n$ mostly have expansion in $n^{-1}$. Only in the case of $\mDe_{10}$ the expansion~\eqref{eunexp} may contain powers of $n^{-1/2}$. Numerical examples show that it is hard to accelerate such a~class of CFs.

\subsubsection*{Step of iteration}
Now, we describe a~single step of the method in detail.
Let us assume that we have some \ftime{computable} approximations $u_n'$ of tails $u_n$, in the sense that
\begin{equation}
	\label{E:delta_prim}
	\delta_n' \coloneqq u_n'-u_n = c \, n^{-m/2} + \mathcal O(n^{-(m+1)/2}),
\end{equation}
where $m\geq 0$ is an~integer number and $c\neq 0$ is an unknown constant.
Here, the name \ftime{computable} means that we can compute them for
consecutive values of $n\in\fN$.
The formula~\eqref{E:delta_prim} says that asymptotic expansions on $u_n$ and
$u_n'$ are identical up to the term $n^{-(m-1)/2}$, and so we say that $u_n'$
has the \ftime{order} $m/2$. Since the approximations $u_n'$ are different from
$u_n$, they do not satisfy the equation~\eqref{oddtailsrel}. Thus, let us define
new approximations,
\begin{equation}
	\label{E:u_n^+}
	u_n^{+}\coloneqq\cfrac{a_n'}{b_n'+\cfrac{a_{n+1}}{b_{n+1}+u_{n+1}'}}
\end{equation}
(cf.~\eqref{urec}), and observe that they are computable, too. They do not give
better results than the approximations $u_n'$, because one can check that
modified approximants $S_{2n-1}(u_n^+)$, $S_{2n+1}(u_{n+1}')$ are equal.
Nevertheless, we can do a~trick, using both of them in order to obtain new
approximations $u_n''$ of higher order. The main results are summarized in
Theorem \ref{T:convacc}, and the details are in its proof. The trick is the same
as in~\cite{Nowak06}. Roughly speaking, we want to find a~relationship between
$\delta_n'$ and
\begin{equation}
	\delta_n^+ \coloneqq u_n^+ - u_n.
\end{equation}
It can be easily verified that $\delta_n^+$ may be expressed by $\delta_{n+1}'$ as follows
\begin{equation}
	\label{E:delta_n^+}
	\delta_n^+ = \psi_n\,\delta_{n+1}',
\end{equation}
where
\begin{equation}
	\label{E:psi_n}
	\psi_n \coloneqq \frac{a'_n a_{n+1}}{\left(a_{n+1}+b'_n\,b_{n+1}+b'_n\,u_{n+1}'\right)\left(a_{n+1}+b'_n\,b_{n+1}+b'_n\,u_{n+1}\right)}.
\end{equation}
However, the formula \eqref{E:delta_n^+} is useless, since $\psi_n$ is expressed
in terms of the tails $u_{n+1}$, which are unknown and rather incomputable.
The major concept is to transform $\delta_{n+1}'$ into $\delta_n'$ in
the relation \eqref{E:delta_n^+} and put $u_{n+1}'$ instead of $u_{n+1}$
in~\eqref{E:psi_n}. This is done in order to obtain the relationship of the
form
\begin{equation}\label{relation}
	\varphi_n' \delta_n^+ - \psi_n' \delta_n' = \bigo{n^{-m/2-\zeta}},
\end{equation}
for some computable functions $\varphi_n'$, $\psi_n'$ and $\zeta>0$.
Then the new approximations $u_n''$ can be found a~solution of~\eqref{relation},
provided that the right-hand side of~\eqref{relation} is replaced by zero and
$u_n$ (stemmed from $\delta_n'$ and $\delta_n^+$) by $u_n''$. Clearly, we
express $u_n''$ in terms of $u_n'$, $u_n^+$, $\varphi_n'$, $\psi_n'$ and obtain
\begin{equation}\label{ab}
	(\varphi_n'-\psi_n')(u_n''-u_n) = \bigo{n^{-m/2-\zeta}}.
\end{equation}
\begin{theorem}
	\label{T:convacc}
	Let the approximations $u_n'$ of tails $u_n$ satisfy \eqref{E:delta_prim} with $m$ such that
	\begin{equation}
		\label{E:m:zalozenie}
		m\geq	\begin{cases}
					0, & \text{if $\text{CF}\in\mDe_{20}\cup\mDn_{20}$,}\\
					1, & \text{if $\text{CF}\in\mDe_{10}$,}\\
					2, & \text{otherwise.}
				\end{cases}
	\end{equation}
	Let $u_n^+$ be given by \eqref{E:u_n^+}, and let auxiliary functions $\psi_n'$, $\varphi_n'$ be given by
	\[
		\psi_n' \coloneqq \frac{a_n' a_{n+1}}{\left(a_{n+1}+b_n'b_{n+1}+b_n'u_{n+1}'\right)^2},\qquad
		\varphi_n' \coloneqq 1+\frac m{2n}.
	\]
	Then the approximations $u_n''$ defined by
	\begin{equation}
		\label{E:u_n''}
		u_n'' \coloneqq
		\frac{\varphi_n'\,u_n^+ - \psi_n'\,u_n'}{\varphi_n'-\psi_n'}
	\end{equation}
	satisfy
	\begin{equation}\label{delta''}
		u_n''-u_n = \bigo{n^{-m/2-\theta}},
	\end{equation}
	where 
	\begin{equation}
	\label{theta}
	\theta \coloneqq \begin{cases}
				1, & \text{if $\text{CF}\in\mDe_{10}\cup\mDe_{20}$,}\\
				2, & \text{if $\text{CF}\in\mDn_{10}\cup\mDn_{20}\cup\wmD_{21}$,}\\
				4, & \text{if $\text{CF}\in\mD_{11}$.}\\
			\end{cases}
	\end{equation}
\end{theorem}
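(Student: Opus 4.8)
The plan is to start from the definition \eqref{E:u_n''} of $u_n''$ and clear the denominator. Writing $u_n^+=u_n+\delta_n^+$ and $u_n'=u_n+\delta_n'$ and substituting into \eqref{E:u_n''}, the terms proportional to the true tail collect into $(\varphi_n'-\psi_n')u_n$, leaving precisely the identity \eqref{ab}, namely $(\varphi_n'-\psi_n')(u_n''-u_n)=\varphi_n'\,\delta_n^+-\psi_n'\,\delta_n'=:R_n$. This step is pure algebra and needs nothing beyond the definitions. It reduces the whole theorem to two independent estimates: an upper bound $R_n=\bigo{n^{-m/2-\zeta}}$ on the numerator, and a sharp estimate of the order of the prefactor $\varphi_n'-\psi_n'$; the exponent $\theta$ in \eqref{theta} then emerges as the difference of these two orders.

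To estimate $R_n$, I would substitute the exact relation \eqref{E:delta_n^+}, $\delta_n^+=\psi_n\delta_{n+1}'$, and decompose $R_n=\psi_n'\bigl(\varphi_n'\delta_{n+1}'-\delta_n'\bigr)+\varphi_n'\delta_{n+1}'\bigl(\psi_n-\psi_n'\bigr)$. The first summand is where the auxiliary factor $\varphi_n'=1+m/(2n)$ earns its keep: expanding $\delta_{n+1}'$ by \eqref{E:delta_prim} and Taylor, the index shift $n\mapsto n+1$ turns the leading term $c\,n^{-m/2}$ into $c\,n^{-m/2}-\tfrac{cm}{2}\,n^{-m/2-1}+\dots$, and multiplication by $\varphi_n'$ cancels exactly that $-\tfrac{cm}{2}\,n^{-m/2-1}$ discrepancy, so that $\varphi_n'\delta_{n+1}'-\delta_n'=\bigO{n^{-m/2-\sigma}}$ with $\sigma=2$ when $\delta_n'$ expands in integer powers of $n^{-1}$ and $\sigma=\tfrac32$ in the only subclass ($\mDe_{10}$) where half-integer powers can occur. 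The second summand is quadratically small: from $\psi_n-\psi_n'=\psi_n'\,b_n'\delta_{n+1}'/\widetilde D_n$, where $\widetilde D_n$ is the denominator factor of $\psi_n$ carrying the true tail $u_{n+1}$, it contains the factor $(\delta_{n+1}')^2=\bigO{n^{-m}}$, and one checks it is of order no larger than the first summand exactly when $m$ meets the threshold in \eqref{E:m:zalozenie}.

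The decisive and most laborious step is the order of the prefactor $\varphi_n'-\psi_n'$, and here the six subclasses split into two regimes according to the leading asymptotics of $\psi_n'=a_n'a_{n+1}/\bigl(a_{n+1}+b_n'b_{n+1}+b_n'u_{n+1}'\bigr)^2$, which I would read off from \eqref{easab}, \eqref{eunexp} and the beginning coefficients of Theorem~\ref{tmu}. In the ``unequal'' classes $\mDn_{10}$, $\mDn_{20}$, $\wmD_{21}$ the limit of $\psi_n'$ differs from $1$, so $\varphi_n'-\psi_n'$ is bounded away from $0$ and the gain $\sigma=2$ survives intact, giving $\theta=2$; in $\mD_{11}$ the product $b_n'b_{n+1}$ makes the denominator of $\psi_n'$ exceed the numerator by two degrees, so $\psi_n'=\bigO{n^{-2}}$ while the prefactor is still bounded below, and this extra decay adds to $\sigma$ to yield $\theta=4$. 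By contrast, in the ``equal-leading-coefficient'' classes $\mDe_{10}$, $\mDe_{20}$ one finds $\psi_n'\to1$, so $\varphi_n'$ and $\psi_n'$ share their limit and the prefactor tends to $0$; expanding $\psi_n'$ to subleading order shows $\varphi_n'-\psi_n'$ to be of exact order $n^{-1/2}$ for $\mDe_{10}$ (half-integer powers present) and $n^{-1}$ for $\mDe_{20}$, and in both cases the loss incurred on dividing is compensated precisely by the correspondingly larger $\sigma$ ($\tfrac32$ and $2$), leaving $\theta=1$.

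Finally I would divide, $u_n''-u_n=R_n/(\varphi_n'-\psi_n')$, combining the numerator bound with the prefactor order in each subclass to read off exactly the values of $\theta$ recorded in \eqref{theta}, and confirming along the way that the hypothesis \eqref{E:m:zalozenie} is precisely what keeps the quadratic second summand from dominating. I expect the main obstacle to be this case analysis of the prefactor, in particular the delicate cancellation in $\mDe_{10}$ and $\mDe_{20}$: there $\varphi_n'-\psi_n'$ is a small difference of two quantities both tending to $1$, so one must expand $\psi_n'$ to the correct subleading (half-integer, for $\mDe_{10}$) order to pin down its exact rate and be sure the leading coefficient does not vanish for the relevant $m$.
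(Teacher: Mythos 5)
Your plan follows the paper's own proof almost step for step: the algebraic reduction to \eqref{ab}; the splitting of the numerator $\varphi_n'\,\delta_n^+ - \psi_n'\,\delta_n'$ into $\psi_n'\bigl(\varphi_n'\,\delta_{n+1}'-\delta_n'\bigr)$ plus a term carrying $\psi_n-\psi_n'$; the index-shift cancellation giving the gain $\tfrac32$ for $\mDe_{10}$ and $2$ otherwise (the paper's \eqref{deltan+1}); the observation that hypothesis \eqref{E:m:zalozenie} is exactly what keeps the term quadratic in $\delta_{n+1}'$ subdominant; and the final division by $\varphi_n'-\psi_n'$, whose order ($n^{-1/2}$ for $\mDe_{10}$, $n^{-1}$ for $\mDe_{20}$, $n^{0}$ otherwise, with the extra decay $\psi_n'=\bigo{n^{-2}}$ in $\mD_{11}$) yields exactly \eqref{theta}. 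All your exponents agree with the paper's $\xi$, $\rho$, $\rho'$ and $\eta$.

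The genuine gap is in the prefactor analysis for $\wmD_{21}$. You file it together with $\mDn_{10}$ and $\mDn_{20}$ as an ``unequal'' class in which ``the limit of $\psi_n'$ differs from $1$'' can simply be read off. For the two $\mDn$ classes this is immediate from $\abs{p_{-1}'}\neq\abs{p_{-1}}$, resp.\ $\abs{p_{-2}'}\neq\abs{p_{-2}}$. But for $\wmD_{21}$ the limit of $\psi_n'$ is $p_{-2}\,p_{-2}'/\left(p_{-2}+q_{-1}\,q_{-1}'+q_{-1}'\,\tau_{-2}\right)^2$, and nothing in \eqref{easab} or Theorem~\ref{tmu} makes it obvious that this differs from $1$; it is the hardest non-vanishing check in the paper's proof. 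The paper sets $z\coloneqq\left(p_{-2}+q_{-1}\,q_{-1}'+q_{-1}'\,\tau_{-2}\right)^2-p_{-2}\,p_{-2}'$, verifies the identity $(2z-\varDelta)^2=\varDelta\left(p_{-2}+p_{-2}'+q_{-1}\,q_{-1}'\right)^2$ with $\varDelta$ the discriminant of the quadratic \eqref{etaueq}, and shows that $z=0$ would force $p_{-2}\,p_{-2}'=0$, a contradiction --- an argument that uses $\varDelta\neq0$, i.e.\ precisely the defining condition of $\wmD_{21}$ in Table~\ref{Tab:Rozwazane_podklasy}. Without an argument of this kind, your claim $\theta=2$ for $\wmD_{21}$ is unsupported. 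A smaller debt of the same kind remains in $\mDe_{20}$: the $n^{-1}$-coefficient of $\varphi_n'-\psi_n'$ is $\bigl(2\tau_{-2}\,q_0'+p_{-1}-p_{-1}'+\tfrac m2\,p_{-2}\bigr)/p_{-2}$, which depends on $m$; you rightly flag that its non-vanishing must be checked for all admissible $m$, but that check (the paper's Lemma~\ref{ld20}, showing the expression cannot vanish for any integer $m>-2$) is the substance of that case, not a formality.
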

\begin{proof}
	Roughly speaking, the idea of the proof is to modify the relation~\eqref{E:delta_n^+} in order to obtain relation~\eqref{relation}.
	
	First, we use the following formula relating $\delta_{n+1}'$ and $\delta_n'$:
	\begin{equation}\label{deltan+1}
		\delta_{n+1}' = (1-\tfrac{m}{2n})\,\delta_n' + \bigo{n^{-m/2-\xi}},\qquad
		\text{where }
		\xi\coloneqq
		\begin{dcases}
		\tfrac{3}{2},    &\text{if $\text{CF}\in\mDe_{10}$,}\\
		2,               &\text{otherwise.}
		\end{dcases}
	\end{equation}
	This can be verified by substituting $n+1$ in place of $n$ in~\eqref{E:delta_prim} and using simple algebra.
	Observe that number $\xi$ is common for all classes in~$\mD$ except the class $\mDe_{10}$; it follows from Lemmata~\ref{ld1020} and~\ref{ld20}. In the case of $\mDe_{10}$ the formula~\eqref{deltan+1} is exactly the same as the one given in~\cite[Eq.~(3.7)]{Nowak06}.

	Second, let us observe some properties of function $\psi_n'$. One can check
	that
	\begin{equation}\label{E:rho}
		\psi_n' = c\,n^{-\rho}+\bigo{n^{-\rho-1/2}} \qquad \text{for some $c\neq 0$ and }
		\rho \coloneqq
		\begin{dcases}
			2,  & \text{in the case of $\mD_{11}$},\\
			0,  & \text{in all other cases.}
		\end{dcases}
	\end{equation}
	This is not obvious only in the case of $\wmD_{21}$. Clearly, the
coefficient of $n^4$ in the denominator of $\psi_n'$ is then equal to \(
		p_{-2} + q_{-1}'\,q_{-1} + q_{-1}'\,\tau_{-2}
	\)
	and it does not vanish, because of the choice of $\tau_{-2}$, given in Theorem~\ref{tmu}.
	Next, using the definition~\eqref{E:psi_n} yields
	\[
	\psi_n - \psi_n' = 
		\frac{a_n'\,b_n'\,a_{n+1}}
		{
			\left(a_{n+1}+b'_n\,b_{n+1}+b'_n\,u_{n+1}\right)
			\left(a_{n+1}+b'_n\,b_{n+1}+b'_n\,u_{n+1}'\right)^2}
		\,\delta_{n+1}',
	\]
	and thus
	\begin{gather*}
		\psi_n-\psi_n' = c\,n^{-\frac m2-\rho'}+\bigo{n^{-\frac m2-\rho'-1/2}}\qquad (c\neq 0),
	\shortintertext{where}
		\rho'\coloneqq
		\begin{dcases*}
			1, & in the cases of $\mDe_{10},\,\mDn_{10}$ and $\wmD_{21}$,\\
			3, & in the case of $\mD_{11}$,\\
			2, & in the cases of $\mDe_{20}$ and $\mDn_{20}$.
		\end{dcases*}
	\end{gather*}
	Now, using the fact that $\tfrac{m}{2}+\xi+\rho\leq m+\rho'$ (cf.~\eqref{E:m:zalozenie}) and multiplying equation~\eqref{E:delta_n^+} by $\varphi_n'$ (some simple algebra is needed) yields the relation~\eqref{relation} for $\zeta \coloneqq \xi+\rho>0$.
	Further, let us observe that approximations $u_n''$, given
in~\eqref{E:u_n''}, can be found as a~solution of~the following equation,
derived from~\eqref{relation} by replacing right-hand side by zero and $u_n$
(stemmed by $\delta_n'$ and $\delta_n^+$) by $u_n''$:
	\[
		\varphi_n'(u_n^+-u_n'') - \psi_n'(u_n'-u_n'') = 0.
	\]
	Observe that equation~\eqref{ab} follows immediately by subtracting \eqref{relation} from the above formula.
	To complete the proof, let us state the following result:
	\begin{equation}
 		\label{E:epsilon_n}
	\varphi_n'-\psi_n' = c \, n^{-\eta} + \bigO{n^{-\eta-1/2}},\quad
	\text{where $c\neq0$ and }
		\eta\coloneqq	\begin{cases}
							\tfrac 12, & \text{in the case of $\mDe_{10}$,}\\
							1, & \text{in the case of $\mDe_{20}$,}\\
							0, & \text{in all other cases.}
						\end{cases}
	\end{equation}
	Hence, the formulae~\eqref{delta''} and~\eqref{theta} hold.	
	Observe that it is important that $c\neq0$ in~\eqref{E:epsilon_n}, since we need to divide the equation~\eqref{ab} by $\varphi_n'-\psi_n'$.
	
	To verify the formula~\eqref{E:epsilon_n}, let us determine the beginning coefficients, say $\sigma_j$, of the asymptotic expansion of $\varphi_n'-\psi_n'$:
	\[
		\varphi_n'-\psi_n' = \sigma_0 + \sigma_1 n^{-1/2} + \sigma_2 n^{-1} + \ldots.
	\]
	By Lemmata \ref{ld1020} and \ref{ld20}, we know that the coefficients $\sigma_{2j+1}$ $(j\geq0)$ may be nonzero only in the case of $\mDe_{10}$. The proof is based on the following facts:
	\begin{itemize}
		\item in the case of $\mDe_{10}$:\quad
 		\(	\displaystyle
 			\sigma_0=0,\quad \sigma_1=\tfrac{2q_0'\,\tau_{-1}}{p_{-1}}\ne 0;
 		\)
		\item in the case of $\mDn_{10}$:\quad
 		\(	\displaystyle
 			\sigma_0= 1-\tfrac{p_{-1}'}{p_{-1}}\ne 0;
 		\)
 		\item in the case of $\mD_{11}$:\quad
 		\(	\displaystyle
 			\sigma_0=1;
 		\)
 		\item in the case of $\mDe_{20}$:\quad
 		\(	\displaystyle
 			\sigma_0=\sigma_1=0,\;\sigma_2=\tfrac{2\tau_{-2}\,q_0'+p_{-1}-p_{-1}'+\frac{m}{2}p_{-2}}{p_{-2}}\ne 0
 		\)
 		(cf.~\eqref{E:j/2:nierownosc:1});
		\item in the case of $\mDn_{20}$:\quad
 		\(	\displaystyle
 			\sigma_0=1-\tfrac{p_{-2}'}{p_{-2}}\ne 0;
 		\)
		\item in the case of $\wmD_{21}$:\quad
 		\(	\displaystyle
 			\sigma_0 = 1 - \tfrac{p_{-2}\,p_{-2}'}{\left(p_{-2}+q_{-1}\,q_{-1}'+q_{-1}'\,\tau_{-2}\right)^2}\ne 0.
 		\)
	\end{itemize}
	Only the last one needs some explanation. Namely, let us put
	\[
		z \coloneqq  \left(p_{-2}+q_{-1}\,q_{-1}'+q_{-1}'\,\tau_{-2}\right)^2 - p_{-2}\,p_{-2}'.
	\]
	One can verify that
	\[
		(2z-\varDelta)^2=\varDelta(p_{-2}+p_{-2}'+q_{-1}\,q_{-1}')^2,
	\]
	where $\varDelta$ is discriminant of the quadratic equation given in Lemma~\ref{lmueq}.
	In the case of subclass $\wmD_{21}$, we have obviously $\varDelta\neq 0$
	(see Table~\ref{Tab:Rozwazane_podklasy}).
	Suppose that $\sigma_0=0$. Using the following simple algebra:
	\[
		4\,p_{-2}\,p_{-2}' = (p_{-2}+p_{-2}'+q_{-1}\,q_{-1}')^2-\varDelta=\left(\varDelta(p_{-2}+p_{-2}'+q_{-1}\,q_{-1}')^2-\varDelta^2\right)/\varDelta,
	\]
	yields $z=0$, and thus $(2z-\varDelta)^2=\varDelta^2$. This implies that $p_{-2}\,p_{-2}'=0$, which is a~contradiction.
\end{proof}

\section{Algorithm}\label{s:alg}
Now, we summarize all the previous results. We give here the algorithm for
convergence acceleration of CFs of the class $\mD$ (see~\eqref{D}). It is almost
\ftime{fully rational} in the sense that it makes only arithmetical operations.
Only in the case of CFs of subclass $\mDe_{10}$ the initial calculations need to
compute square roots. In the main loop of the algorithm, only arithmetical
operations are used, for all the subclasses in $\mD$.

The algorithm is divided into two parts. First, we calculate initial approximations $u_{n0}$ of tails $u_n$ of CF, such that $u_{n0}-u_n=\bigo{n^{-m/2}}$, where
	\begin{equation*}
		m=
		\begin{dcases*}
		0, & in the cases of $\mDe_{20}$ and $\mDn_{20}$,\\
		1, & in the case of $\mDe_{10}$,\\
		2, & in other cases
		\end{dcases*}
	\end{equation*}
(cf.~\eqref{E:m:zalozenie}).
There we use the results given in Section~\ref{astail}. The second and main part
of algorithm is the iteration which improves, in each step, the accuracy of
approximations $u_{nj}$ of the tails $u_n$. Clearly, using the Theorem~\ref{tmu}
repeatedly, we obtain the following relation for approximants $u_{nj}$:
\[ u_{nj}-u_n = \bigo{n^{-m/2-j\theta}}. \]

\begin{algorithm}\label{alg}
Consider the continued fraction~\eqref{etvcfrac} belonging to $\mD$ (see~\eqref{D}).
\begin{description}
\item[\textbf{Initial step.}]
	Let us put the initial approximants $u_{n0}$ as follows:
	\begin{enumerate}
	\renewcommand{\theenumi}{\alph{enumi}}\renewcommand{\labelenumi}{\theenumi)}
		\item if CF$\in\mDe_{10}$, then \[ u_{n0}\coloneqq \tau_{-1}\,n^{1/2}+\tau_0, \]
			where
			\begin{equation*}
				\tau_{-1} \coloneqq \sgn\left(\Re\frac{q'_0\sqrt{\frac{q_0\,p_{-1}}{q'_0}}}{p_{-1}}\right) \sqrt{\frac{q_0\,p_{-1}}{q'_0}}, \qquad
				\tau_0    \coloneqq \frac{2p'_{0}-2q'_{0}\,q_{0}+p_{-1}-2p_{0}}{4q'_{0}};
			\end{equation*}
		\item if CF$\in\mDn_{10}$, then \[ u_{n0} \coloneqq \tau_{-2}\,n + \tau_0, \]
			where
			\begin{align*}
				\tau_{-2}&\coloneqq\begin{dcases*} 0, & if $\abs{p_{-1}'}<\abs{p_{-1}}$, \\ \frac{p'_{-1}-p_{-1}}{q_0'} & otherwise, \end{dcases*}
				\\
				\tau_0 &\coloneqq
				\begin{dcases*}
				\frac{p'_{-1} q_0}{p_{-1}-p_{-1}'}, & if $\abs{p_{-1}'}<\abs{p_{-1}}$,\\
				{\frac{p_{{-1}}q_{{0}}}{p'_{{-1}}-p_{{-1}}}}+{\frac{p_{{-1}}+p'_{{0}}-p_{{0}}}{q'_{{0}}}}+{\frac{\left(p_{{-1}}-p'_{{-1}}\right) q'_1}{{q_{0}'}^2}}
				& otherwise;
				\end{dcases*}
			\end{align*}
		\item if CF$\in\mD_{11}$, then
			\(\ds
				u_{n0} \coloneqq \frac{p'_{-1}}{q'_{-1}};
			\)
		\item if CF$\in\mDe_{20}$, then
			\[
				u_{n0} \coloneqq 
				\dfrac{-\beta+\sgn\left(\Re\frac{\sqrt{\beta^2-4\alpha\gamma}}{p_{-2}}\right)\sqrt{\beta^2-4\alpha\gamma}}{2\alpha}\, n,
			\]
			where
			\(
				\alpha = q'_0, \; \beta = p_{-1}-p'_{-1}-p_{-2}, \; \gamma = -q_0\, p_{-2};
			\)
		\item if CF$\in\mDn_{20}$, then
			\[
				u_{n0} \coloneqq \tau_{-4}\, n^2 + \tau_{-2}\, n,
			\]
			where
			\begin{align*}
				\tau_{-4}&=\begin{dcases} 0 & \text{if $\abs{p_{-2}'}<\abs{p_{-2}}$}, \\ \frac{p'_{-2}-p_{-2}}{q_0'} & \text{otherwise}, \end{dcases}\\
				\tau_{-2} &=
				\begin{dcases*} 0, & if $\abs{p_{-2}'}<\abs{p_{-2}}$,\\
				\frac{2 p_{-2}+p'_{-1}-p_{-1}}{q'_0} + \frac{q'_1(p_{-2}-p'_{-2})}{{q'_0}^2} & otherwise;
				\end{dcases*}
			\end{align*}
		\item if CF$\in\wmD_{21}$, then
			\[
				u_{n0} \coloneqq \tau_{-2}\, n + \tau_0,
			\]
			where
			\begin{equation*}
				\tau_{0}=\frac{p'_{{-2}}q_{{0}}+p'_{{-1}}q_{{-1}}
				-\left(q'_{{-1}}q_{{0}}+q'_{{0}}q_{{-1}}-p'_{{-1}}+p_{{-1}}-p'_{{-2}}\right)\tau_{{-2}}
				-\left(q'_{-1}+q'_{{0}}\right)\tau_{-2}^{2}
				}{2\,q'_{{-1}}\tau_{{-2}}+p_{{-2}}-p'_{{-2}}+q'_{{-1}}q_{{-1}}},
			\end{equation*}
			and~$\tau_{-2}$ is one of two roots $\{x_0, x_1\}$ of equation
			\begin{equation*}
				q_{-1}'\,x^2+(p_{-2}-p'_{-2}+q_{-1}\,q'_{-1})\,x-q_{-1}\,p'_{-2}=0,
			\end{equation*}
			which yields less absolute value
			\begin{equation*}
				\abs{\frac{p_{-2}'}{q_{-1}'}-x_k},\qquad k={0,1}.
			\end{equation*}
	\end{enumerate}
\item[\textbf{Main loop.}]
	Compute the approximations $u_{nj}$ using the following formula:
	\begin{equation*}
		u_{n,j+1} \coloneqq
		\frac{\varphi_n^{(j)}\,u_{nj}^+ - \psi_n^{(j)}\,u_{nj}}{\varphi_n^{(j)}-\psi_n^{(j)}},
		\qquad n\geq 1, \; j\geq 0,
	\end{equation*}
	where
	\begin{align*}
		u_{nj}^+ &\coloneqq \cfrac{a_n'}{b_n'+\tfrac{a_{n+1}}{b_{n+1}+u_{n+1,j}}},\\
		\varphi_n^{(j)} &\coloneqq 1+\left(\frac{m}{2}+j\theta\right) n^{-1},\\
		\psi_n^{(j)} &\coloneqq \frac{a'_n a_{n+1}}{\left(a_{n+1}+b'_n\,b_{n+1}+b'_n\,u_{n+1,j}\right)^2}.
	\end{align*}
\end{description}
\end{algorithm}

\section{Numerical results}\label{s:num}
We give examples of some remarkable mathematical constants and special functions
having continued fraction expansions. We apply our method (see
Algorithm~\ref{alg}) in order to approximate their value.

Consider a~two-variant CF of the form~\eqref{etvcfrac} belonging to the class $\mD$.
Of course, one may always approximate its value using the following classic methods:
\begin{enumerate}
\item computing classical approximants $S_n(0)$ for sufficiently large $n$,
\item applying the methods given in~\cite{Lorentzen95} or \cite{Nowak06} to even or odd part of CF,
\item using equivalent transformation in order to obtain limit $2$-periodic CF,
and using the techniques given in~\cite{Lorentzen95},
\item using the recent methods of Paszkowski~\cite{Paszkowski03}.
\end{enumerate}
See \cite[\S5]{Lorentzen08} for more details on numerical computation of CFs.
The first idea should be rather the last resort in order to compute any CF.
Convergence of CF from the class $\mD$ may be very slow (see
Example~\ref{ex1}).

The second method requires to derive even or odd part of CF (see, e.g.,
\cite[Thm. 2.19--2.20, pp. 86--87]{Lorentzen08}).
After that, in order to apply the techniques summarized by Lorentzen
in~\cite{Lorentzen95}, some equivalent transformation (see, e.g.,
\cite[Thm.~2.14, p.~77]{Lorentzen08}) into limit periodic CF may be needed.
However, for some subclasses in $\mD$, this process leads to the case that the methods given in~\cite{Lorentzen95} or \cite{Nowak06} are not applicable. For example, the `improvement machine', mentioned in Section~\ref{s:method}, can be applied only to CFs of the so-called \ftime{loxodromic} type; see, e.g., \cite[p.~73]{Lorentzen95} for definition of CFs of \ftime{loxodromic}, \ftime{elliptic}, \ftime{parabolic} and \ftime{identity} type.
Clearly, one can verify that odd (even) part of CF of the subclasses~$\mDe_{10}, \mDe_{20}$ is equivalent to a~limit periodic CF of parabolic type. Then the question of its convergence acceleration is much more subtle unless it is divergent. 
In the case of limit periodic continued fraction $\K(c_n/d_n)$ of parabolic
type, one can always try to use so-called \ftime{fixed point} method (see
\cite{Lorentzen95}, \cite[p.~219]{Lorentzen08}), i.e. computing modified
approximants $S_n(-1/2)$, provided that, without loss of generality, $c_n\to
-1/4$ and $d_n\to 1$. But, in order to accelerate its convergence, there are
some additional conditions required (see, e.g., \cite[Eq.~(4.10)]{Lorentzen95}).
In examples~\ref{ex2} and \ref{ex3} we show a~class of CFs (belonging to
$\mDe_{20}$ and~$\mDe_{10}$) having odd (even) part of parabolic type, for which
`fixed point' method does not work.

The third method is quite similar to the previous one, since it depends on the type of limit periodic CF.
Still the `improvement machine' can be applied only in the case of CF of loxodromic type. Other cases  need  more careful handling.
Anyway, all mentioned methods of computing limit periodic CFs do not cover the considered class $\mD$ of CFs.

The fourth method, established by Paszkowski~\cite{Paszkowski03}, is based on analytical transformation of tails of CF. It can also be applied only to some subclasses in $\mD$. For example, it does not handle the case of CF with tails having the asymptotic expansion in powers of $n^{-1/2}$, and so it cannot be applied to the subclass~$\mDe_{10}$. Moreover, it is quite difficult to compare such analytical method with Algorithm~\ref{alg}, which is rather of numerical character.

In the sequel, we give a~series of examples showing the application of our
method (Algorithm~\ref{alg}) in order to accelerate the convergence of CFs
belonging to the class $\mD$. We give additional remarks on computing their
value by using aforementioned methods, and, if it is possible, we show
a~comparison.

Before we begin, let us fix some notation to measure the \ftime{accuracy} of the method.
Consider the CF $\K(a_n/b_n; a_n'/b_n')$ of the class $\mD$, convergent to
the value $V$.
Let $\acc(x)$ denote number of exact significant decimal digits of $x$ in $V$:
\[
	\acc(x) \coloneqq -\log_{10}\abs{1-\frac{x}{V}}.
\]
We use the following notation:
\begin{equation*}
	\delta_n \coloneqq \acc(S_n(0)), \qquad \delta_{nj} \coloneqq \acc\left(S_{2n-1}(u_{nj})\right),
\end{equation*}
where $u_{nj}$ are the values computed by Algorithm~\ref{alg}.
Let us remark that $u_{nj}$ is an~approximation of the tail $u_n=f^{(2n-1)}$ of
CF, and that is why we analyse the modified approximant $S_{2n-1}(u_{nj})$
(cf.~\eqref{ematail}). We call the number $\acc(S_{2n-1}(u_{nj}))$ accuracy of
$u_{nj}$. In all numerical examples, we observed that it is increasing in $j$,
unless it reaches the peak level, which can be, in some cases, more or less the
half of the arithmetic precision in a~given computer algebra system. We used
the system \textsf{Maple 14} with $128$-digit (decimal) arithmetic precision.

\begin{example}\label{ex1}
Consider the following CF expansion
\begin{equation}\label{E:33,12}
	\dfrac{4}{\psi\left(\frac{x+3+\nu}{4}\right)+\psi\left(\frac{x+3-\nu}{4}\right)-\psi\left(\frac{x+1+\nu}{4}\right)-\psi\left(\frac{x+1-\nu}{4}\right)}= x + \cftwosum{n=1}{\infty}{\polter{(2n-1)^2-\nu^2}{x}+\polter{(2n)^2}{x}}
\end{equation}
($\Re x>0$, cf.~\cite[p.~33, Eq.~(12)]{Perron}), where $\psi(x)$ denotes the
digamma function, $\psi(x) = \Gamma'(x)/\Gamma(x) $.
Let us put $x\coloneqq1$ and $\nu\coloneqq 1/2$. We have the following CF of subclass $\mDe_{20}$:
\begin{equation}
	\label{E:33,12:1:1/2}
	V \coloneqq 1.327052799890558739735... = 
	1+\cftwosum{n=1}{\infty}{\polter{(2n-1)^2-\frac14}{1}+\polter{(2n)^2}{1}}.
\end{equation}
It is very slowly convergent. For example, the $100$th classical approximant $S_{100}(0)=1.319558\ldots$ yields $\delta_{100}=2.25$. One can check that $\delta_{1000}=3.24$ and $\delta_{10000}=4.24$, so it seems that accuracy of classical approximants is increasing logarithmically. Nevertheless, one can derive very good approximation of its value by using modified approximants $S_n(\omega_n)$ with $\omega_n\approx f^{(n)}$. The Algorithm~\ref{alg} provides the approximations $u_{nj}$ of tails $f^{(2n-1)}$ of increasing accuracy in $j$, in the sense that
\begin{align*}
	u_{n0} &= \tau_{-2}\,n,\\
	u_{n1} &= \tau_{-2}\,n+\tau_{0}+{\frac{225}{512}}\,{n}^{-1}+\Order{-2},\\
	u_{n2} &= \tau_{-2}\,n+\tau_{0}+\tau_{2}\,{n}^{-1}+{\frac {7125}{32768}}\,{n}^{-2}-{\frac {868725}{1048576}}\,{n}^{-3}+\Order{-4},\\
	u_{n3} &= \tau_{-2}\,n+\tau_{0}+\tau_{2}\,{n}^{-1}+\tau_{4}\,{n}^{-2}+{\frac {497625}{4194304}}\,{n}^{-3}+\Order{-4},\\[-.6em]
\vdots\\[-.6em]
	u_{nj} &= u_n+\Order{-j},\qquad j\geq 4,
\end{align*}%
where $\tau_{i}$ are the coefficients of the asymptotic expansion of the tails
$u_n=f^{(2n-1)}$ (see~\eqref{eunexp}).
Let us observe that in each step of iteration in Algorithm~\ref{alg}, we derive
only one (nontrivial) coefficient of this expansion; let us remark that
$\tau_{2i-1}$ vanishes for all $i\in\fN_{0}$. This is the worst case to
accelerate (cf.~\eqref{theta}).
However, the approximation $u_{1, 10}$ yields about ten decimal digits of value $V$ of fraction \eqref{E:33,12:1:1/2}:
\[
	S_{1}(u_{1,10}) \approx \underline{1.327052799}780862, 
	\quad\text{and so} \quad \delta_{1,10}=10.08.
\]
Table~\ref{tab:33,12_1_1/2} shows the accuracies of all approximations $u_{nj}$ needed to be computed in order to obtain the value $u_{1,10}$. One can observe that accuracy of $u_{1,j}$ (see the second row of the table) is increasing linearly in $j$.
\begin{table}[htb]
	\caption{Values $\delta_{nj}$ derived by Algorithm~\ref{alg} applied to CF (\ref{E:33,12:1:1/2}).}
	\label{tab:33,12_1_1/2}
	\centering\tablesize
	\begin{tabular}{c|rrrrrrrrrrrrrrr}
	\multicolumn{1}{c|}{$n\backslash j$} & \multicolumn{1}{c}{0} & \multicolumn{1}{c}{1} & \multicolumn{1}{c}{2} & \multicolumn{1}{c}{3} & \multicolumn{1}{c}{4} & \multicolumn{1}{c}{5} & \multicolumn{1}{c}{6} & \multicolumn{1}{c}{7} & \multicolumn{1}{c}{8} & \multicolumn{1}{c}{9} & \multicolumn{1}{c}{10} \\\hline
	1  & $1.24$ & $2.40$ & $3.24$ & $4.04$ & $4.82$ & $5.62$ & $6.44$ & $7.29$ & $8.17$ & $9.10$ & $10.08$\\
	2  & $1.79$ & $3.03$ & $3.90$ & $4.72$ & $5.53$ & $6.36$ & $7.22$ & $8.10$ & $9.03$ & $10.01$\\
	3  & $2.13$ & $3.46$ & $4.38$ & $5.25$ & $6.11$ & $6.98$ & $7.87$ & $8.80$ & $9.78$\\
	4  & $2.38$ & $3.78$ & $4.76$ & $5.68$ & $6.58$ & $7.49$ & $8.43$ & $9.40$\\
	5  & $2.57$ & $4.04$ & $5.08$ & $6.04$ & $6.98$ & $7.94$ & $8.91$\\
	6  & $2.73$ & $4.25$ & $5.34$ & $6.34$ & $7.33$ & $8.32$\\
	7  & $2.86$ & $4.44$ & $5.57$ & $6.61$ & $7.64$\\
	8  & $2.98$ & $4.60$ & $5.77$ & $6.85$\\
	9  & $3.08$ & $4.74$ & $5.95$\\
	10  & $3.17$ & $4.87$\\
	11  & $3.25$
	\end{tabular}
\end{table}
Moreover, it is worth to analyse the Figure \ref{fig:ulamek_33,12_zbieznosc_w_kolejnych_iteracjach_metody}, since it shows the values $\delta_{nj}$ --- accuracies of $u_{nj}$ --- for large values of $n$. One can observe that each step of iteration in Algorithm~\ref{alg} is of great importance, since it is much more productive than using approximations $u_{nj}$ for large values of $n$, provided that $j$ is fixed.
\begin{figure}[htb]
\centering
\includegraphics[width=.65\textwidth]{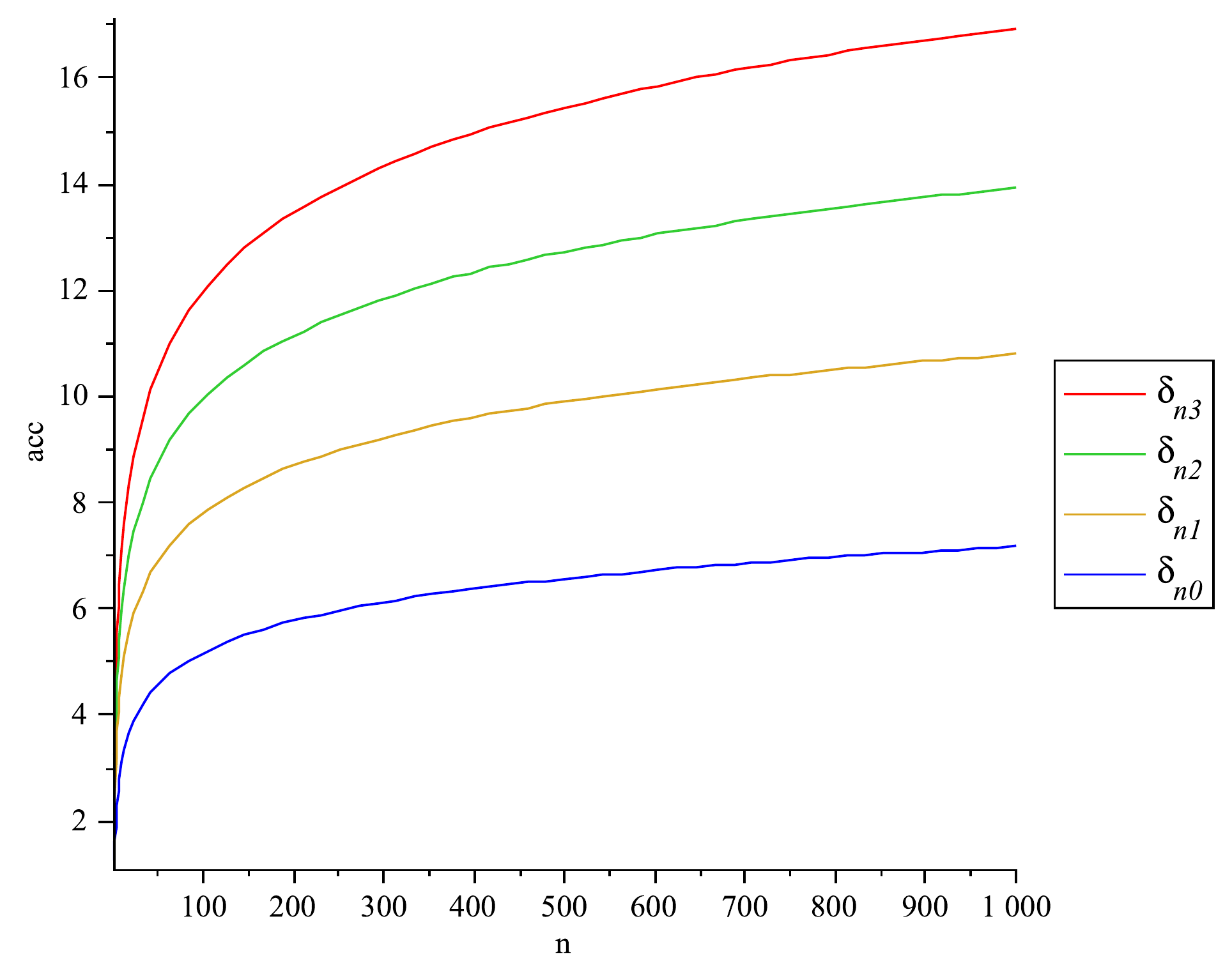}
\caption{Values $\delta_{nj}$ for CF~(\ref{E:33,12:1:1/2}).}
\label{fig:ulamek_33,12_zbieznosc_w_kolejnych_iteracjach_metody}
\end{figure}%

On the other hand, one can verify that even part of CF~\eqref{E:33,12} is equivalent to the CF of the form $\K(\tilde a_n/1)$,
where
\begin{equation}\label{E:asta}
	\tilde a_n+\frac14=\frac{1}{16}(x^2-1)\,n^{-2}+\frac{1}{32}(3\,x^{2}-3-\nu^2){n}^{-3}+\Order{-4},
\end{equation}
and so it is of parabolic type. Thus, we cannot use `improvement machine' of
Lorentzen and Waadeland (see \cite{Lorentzen95}, \cite[p.~227]{Lorentzen08}).
Let us remark that $\Re x>0$, so CF is convergent by some generalisation of
Worpitzky's theorem (see~\cite[pp.~45--50]{Wall48}, \cite[Thm.~3.30,
p.~136]{Lorentzen08}). Moreover, we get $a_n+\tfrac14=\bigo{n^{-3}}$, since we
took $x=1$.
Using the theorem of Thron and Waadeland \cite{ThronWaadeland80}, we show that
modified approximants $S_n(-1/2)$ are convergent to the value $V$ faster than
classical approximants $S_n(0)$, and so the `fixed point' method works. 
However, it is not very efficient, since the accuracy of $S_n(-1/2)$ behaves
similarly to $\delta_{n0}$ for consecutive values of $n$. Namely, for $n=10,
100, 1000, 10000$, we get the following values of $\acc(S_n(-1/2))$: $4.52,
6.36, 8.35, 10.34$, respectively.
In order to obtain $14$ decimal digits of $V$ we have to compute
$S_{670000}(-1/2)$. This should be compared with $\delta_{1,13}=14.0$ obtained
after only $13$ iterations in Algorithm~\ref{alg}.
\end{example}

\begin{example}\label{ex2}
Let us now set $x\coloneqq 1/2$, $\nu\coloneqq 1/2$ in~\eqref{E:33,12}. We
obtain CF with value $V\approx0.883414269615$ and even part being equivalent to
CF of the form $\K(\tilde a_n/1)$, where
\begin{equation}\label{E:asta1/2}
	\tilde a_n +\frac14 = -\frac{3}{64}\,n^{-2} + \Order{-3}.
\end{equation}
It is of parabolic type, too, hence we cannot use `improvement machine' of Lorentzen and Waadeland.
One can verify that conditions given by Thron and Waadeland in~\cite{ThronWaadeland80} are not satisfied, and we cannot use their results to accelerate the convergence, either. Indeed, `fixed point' method fails --- it gives only about $0.5$ accuracy more than classical approximants $S_n(0)$. For example, we have
$\delta_{10}=0.78,\;\acc(S_{10}(-1/2))=1.18$; $\delta_{100}=1.21,\;\acc(S_{100}(-1/2))=1.67$; $\delta_{1000}=1.69,\;\acc(S_{1000}(-1/2))=2.16$; $\delta_{10000}=2.19,\;\acc(S_{10000}(-1/2))=2.66$.
On the other hand, one can effectively approximate the value $V$ of the considered CF using Algorithm~\ref{alg}.
The accuracies $\delta_{1j}$ for $j=0,1,\ldots,10$ are:
\[
	1.02 , \quad 2.15 , \quad 3.00 , \quad 3.82 , \quad 4.65 , \quad 5.51 , \quad 6.41 , \quad 7.33 , \quad 8.29 , \quad 9.29 , \quad 10.32.
\]
One can verify that CF in~\eqref{E:33,12} is equivalent to a~the fraction considered by Paszkowski \cite{Paszkowski03}. Namely, we have
\begin{equation}\label{E:33,12:M}
	x+\cftwosum{n=1}{\infty}{\polter{(2n-1)^2-\nu^2}{x}+\polter{(2n)^2}{x}} \simeq x+\frac{4}{x}\,\mathcal{M}\left(\frac{x^2}{4}, -1, \frac{1-\nu^2}{4}, 0, 0\right),
\end{equation}
where class $\mathcal{M}$ is defined in~\cite[Eq.~(5.8)]{Paszkowski03}.
Using Paszkowski's analytical formulae gives a~faster convergent CF. For example, its classical approximant $\tilde f_{20}$ yields $\acc(\tilde f_{20})=21.6$, while our method (applied to~\eqref{E:33,12}) --- $\delta_{1,20}=20.0$.
\end{example}

\begin{example}\label{ex3}
Now, let us consider the following CF of subclass $\mDe_{10}$:
\begin{equation}
	\label{E:103,14}
	f(z,\alpha)\coloneqq 
	\left(z^{\alpha-1}e^z\int_{z}^{\infty} e^{-v} v^{-\alpha} dv\right)^{-1}
	\!\!\!\!\!
	=z+\cftwosum{n=1}{\infty}{\polter{n+\alpha-1}{1} + \polter{n}{z}},\quad
	z>0,\;\alpha\in\fR
\end{equation}
(cf.~\cite[p.~103, Eq.~(14)]{Perron}).
The asymptotic expansion of~its tails contains powers of~$n^{-1/2}$ (see~Theorem~\ref{tmu}), and thus we cannot use Paszkowski's method \cite{Paszkowski03} in order to accelerate its convergence.
Using its even part, we derive the following one-variant CF of subclass $\mC_{21}$:
\begin{equation}\label{E:103,14:C21}
	f(z,\alpha) = z+\polter{\alpha\,z}{z+1}+\cfinf[n=2]\polter{-(n-1)(n+\alpha-1)}{2n+z+\alpha-1}.
\end{equation}
Unfortunately, we cannot use the method of~\cite{Nowak06}, since the above fraction does not belong to the subclass $\mathcal{I}_1\subset\mC_{21}$ (see the notation introduced in \cite[Eq.~(2.3)]{Nowak06}).
Further, the equivalent transformation into limit periodic CF of the form $\K(\tilde a_n/1)$ gives
\begin{equation}\label{E:103,14,par,z}
	\tilde a_n = -\frac{1}{4}+\frac{z}{4}\,n^{-1}+\bigO{n^{-2}}.
\end{equation}
Let us put $z\coloneqq 1/16$ and~$\alpha\coloneqq 4$. Then CF
in~\eqref{E:103,14} is convergent to \[ V\approx
3.09147726049419952742569567195.\] We have $\delta_{10}=0.61$,
$\delta_{50}=3.02$, $\delta_{100}=4.45$ which shows quite slow convergence of
the fraction. Applying `fixed point' method to CF $\K(\tilde a_n/1)$ equivalent
to~\eqref{E:103,14:C21} seems to be worthless, since:
\begin{alignat*}{2}
	\delta_{10} &= 1.45,& \qquad \acc(S_{10}(-\tfrac12))&=1.45, \\
	\delta_{50} &= 4.45,& \acc(S_{50}(-\tfrac12))&=4.53, \\
	\delta_{100}&= 6.17,& \acc(S_{100}(-\tfrac12))&=6.23.
\end{alignat*}
Indeed, equation~\eqref{E:103,14,par,z} does not follow the assumptions of Thron and Waadeland~\cite{ThronWaadeland80}.
Let us remark that the computations of Algorithm~\ref{alg} is organized so that array $u_{nj}$ is upper triangular.
Thus, if we want to compute the approximation $u_{1,r}$, then we need to start
with $r+1$ initial approximations $u_{n0}$ $(n=1,2,\ldots,r+1)$.
Applying our method to \eqref{E:103,14} yields $u_{1,79}$ with accuracy
$\delta_{1,79}=26.23$. Let us note that $S_{n}(0)$ for $n=0,1,\ldots,160$ yield
at most $6$ exact significant decimal digits of $V$.
\end{example}

\begin{example}\label{ex4}
Now, let us consider CF~of subclass $\wmD_{21}$:
\begin{equation}\label{E:152,7}
	\frac{x}{\log(1+x)} = 1+\cftwosum{n=1}{\infty}{\polter{n^2 x}{2n}+\polter{n^2 x}{2n+1}},\qquad x\notin \left(-\infty,-1\right]
\end{equation}
(cf.~\cite[p.~152, Eq.~(7)]{Perron}).
We use its even part and obtain the CF of subclass $\mC_{21}$, of the form $\K(c_n/d_n)$ with
\begin{equation}\label{E:D21:cd}
\begin{split}
	c_n = -x^2\,n^2+2x^2\,n-\frac32\,x^2+\frac54\,x^2\,n^{-1}+\Order{-2},\\
	d_n = (2x+4)n-2-x+\left(\frac{3x}2+2\right)n^{-1}+\Order{-2}.
\end{split}
\end{equation}
One can easily check that it is of loxodromic type and belongs to subclass
$\mathcal{I}_1$ (see notation in~\cite[Eq.~(2.3)]{Nowak06}). Thus, one can use
either `improvement machine' or the method described in~\cite{Nowak06}. Of
course, one can also use `fixed point' method, classical approximants of CF, as
well. We try to compare their results with the results obtained by
Algorithm~\ref{alg} applied to CF~\eqref{E:152,7}.

Let us put complex value of parameter $x=-1.5+0.01\im$. Notice that it is
very close to the ray $(-\infty,-1]$, which is the divergence region
of~\eqref{E:152,7}. Hence, CF~\eqref{E:152,7} is convergent very slowly. Indeed,
we have
\[ \delta_{10}=-0.3, \quad \delta_{50}=-0.5,\quad \delta_{100}=0.1,\quad \delta_{200}=0.6,\quad \delta_{300}=0.9. \]
We applied `improvement machine' and the method described in~\cite{Nowak06} to
CF $\K(c_n/d_n)$ (cf.~\eqref{E:D21:cd}), and the Algorithm~\ref{alg} to
CF~\eqref{E:152,7}. All of the methods construct upper triangular array of
tails' approximations. We provide all of them with only fifteen initial
approximations. Table~\ref{tab:152,7:complex} presents the obtained accuracies.
One can observe that `improvement machine' yields only $4$ decimal digits, while method of~\cite{Nowak06} and Algorithm~\ref{alg} --- $10$ and $16$ exact significant decimal digits of $V$, respectively.
	\begin{table}[b]
		\caption{Computation of (\ref{E:152,7}) with $x=-1.5+0.01\im$ by the
		methods: `improvement machine' \cite[p.~227]{Lorentzen08},
		method of~\cite{Nowak06}, and Algorithm~\ref{alg}.}
		\label{tab:152,7:complex}
		\centering\tablesize
		\subtable[`Improvement machine': values of $\acc(S_n(w_{n}^{(j)}))$.]{
		\begin{tabular}{c|rrrrrrrrrrrrrrr}
		\multicolumn{1}{c|}{$n\backslash j$}  & \multicolumn{1}{c}{0} & \multicolumn{1}{c}{1} & \multicolumn{1}{c}{2} & \multicolumn{1}{c}{3} & \multicolumn{1}{c}{4} & \multicolumn{1}{c}{5} & \multicolumn{1}{c}{6} & \multicolumn{1}{c}{7} & \multicolumn{1}{c}{8} & \multicolumn{1}{c}{9} & \multicolumn{1}{c}{10} & \multicolumn{1}{c}{11} & \multicolumn{1}{c}{12} & \multicolumn{1}{c}{13} & \multicolumn{1}{c}{14} 
		\\\hline
		$ 1$ & $0.6$ & $0.8$ & $1.1$ & $1.4$ & $1.6$ & $1.9$ & $2.1$ & $2.4$ & $2.6$ & $2.8$ & $3.1$ & $3.3$ & $3.6$ & $3.8$ & $4.0$ \\
$ 2$ & $1.4$ & $1.7$ & $2.1$ & $2.4$ & $2.7$ & $3.0$ & $3.3$ & $3.6$ & $3.9$ & $4.2$ & $4.5$ & $4.7$ & $5.0$ & $5.3$ \\
$ 3$ & $1.8$ & $2.3$ & $2.7$ & $3.1$ & $3.5$ & $3.8$ & $4.2$ & $4.5$ & $4.8$ & $5.1$ & $5.4$ & $5.8$ & $6.1$ \\
$ 4$ & $2.0$ & $2.6$ & $3.1$ & $3.6$ & $4.0$ & $4.4$ & $4.8$ & $5.2$ & $5.6$ & $5.9$ & $6.2$ & $6.6$ \\
$ 5$ & $2.2$ & $2.9$ & $3.5$ & $4.0$ & $4.5$ & $4.9$ & $5.4$ & $5.8$ & $6.2$ & $6.6$ & $6.9$ \\
$ 6$ & $2.4$ & $3.1$ & $3.8$ & $4.3$ & $4.9$ & $5.4$ & $5.8$ & $6.3$ & $6.7$ & $7.1$ \\
$ 7$ & $2.5$ & $3.3$ & $4.0$ & $4.6$ & $5.2$ & $5.7$ & $6.2$ & $6.7$ & $7.2$ \\
$ 8$ & $2.7$ & $3.5$ & $4.2$ & $4.9$ & $5.5$ & $6.0$ & $6.6$ & $7.1$ \\
$ 9$ & $2.8$ & $3.6$ & $4.4$ & $5.1$ & $5.7$ & $6.3$ & $6.9$ \\
$10$ & $2.9$ & $3.8$ & $4.6$ & $5.3$ & $6.0$ & $6.6$ \\
$11$ & $3.0$ & $3.9$ & $4.7$ & $5.5$ & $6.2$ \\
$12$ & $3.0$ & $4.0$ & $4.9$ & $5.7$ \\
$13$ & $3.1$ & $4.1$ & $5.0$ \\
$14$ & $3.2$ & $4.2$ \\
$15$ & $3.2$
		\end{tabular}}\\
		\subtable[Method of~\cite{Nowak06}: values of $\acc(S_n(t_{nj}))$.]{
		\begin{tabular}{c|rrrrrrrrrrrrrrr}
		\multicolumn{1}{c|}{$n\backslash j$}  & \multicolumn{1}{c}{0} & \multicolumn{1}{c}{1} & \multicolumn{1}{c}{2} & \multicolumn{1}{c}{3} & \multicolumn{1}{c}{4} & \multicolumn{1}{c}{5} & \multicolumn{1}{c}{6} & \multicolumn{1}{c}{7} & \multicolumn{1}{c}{8} & \multicolumn{1}{c}{9} & \multicolumn{1}{c}{10} & \multicolumn{1}{c}{11} & \multicolumn{1}{c}{12} & \multicolumn{1}{c}{13} & \multicolumn{1}{c}{14} 
		\\\hline
		$ 1$ & $0.6$ & $1.8$ & $2.5$ & $3.3$ & $4.0$ & $4.7$ & $5.4$ & $6.1$ & $6.8$ & $7.5$ & $8.1$ & $8.8$ & $9.4$ & $10.1$ & $10.7$ \\
$ 2$ & $1.4$ & $2.4$ & $3.1$ & $3.9$ & $4.6$ & $5.3$ & $6.0$ & $6.7$ & $7.4$ & $8.1$ & $8.7$ & $9.4$ & $10.0$ & $10.7$ \\
$ 3$ & $1.8$ & $2.8$ & $3.6$ & $4.4$ & $5.1$ & $5.9$ & $6.6$ & $7.3$ & $7.9$ & $8.6$ & $9.3$ & $9.9$ & $10.6$ \\
$ 4$ & $2.0$ & $3.1$ & $4.0$ & $4.8$ & $5.6$ & $6.3$ & $7.0$ & $7.7$ & $8.4$ & $9.1$ & $9.8$ & $10.5$ \\
$ 5$ & $2.2$ & $3.3$ & $4.3$ & $5.1$ & $5.9$ & $6.7$ & $7.4$ & $8.1$ & $8.9$ & $9.6$ & $10.2$ \\
$ 6$ & $2.4$ & $3.5$ & $4.5$ & $5.4$ & $6.2$ & $7.0$ & $7.8$ & $8.5$ & $9.2$ & $10.0$ \\
$ 7$ & $2.5$ & $3.7$ & $4.8$ & $5.7$ & $6.5$ & $7.3$ & $8.1$ & $8.9$ & $9.6$ \\
$ 8$ & $2.7$ & $3.9$ & $5.0$ & $5.9$ & $6.8$ & $7.6$ & $8.4$ & $9.2$ \\
$ 9$ & $2.8$ & $4.1$ & $5.1$ & $6.1$ & $7.0$ & $7.9$ & $8.7$ \\
$10$ & $2.9$ & $4.2$ & $5.3$ & $6.3$ & $7.2$ & $8.1$ \\
$11$ & $3.0$ & $4.3$ & $5.5$ & $6.5$ & $7.4$ \\
$12$ & $3.0$ & $4.4$ & $5.6$ & $6.7$ \\
$13$ & $3.1$ & $4.5$ & $5.7$ \\
$14$ & $3.2$ & $4.6$ \\
$15$ & $3.2$
		\end{tabular}}
		\\
		\subtable[Algorithm~\ref{alg}: values of $\delta_{nj}$.]{%
		\begin{tabular}{c|rrrrrrrrrrrrrrr}
		\multicolumn{1}{c|}{$n\backslash j$}  & \multicolumn{1}{c}{0} & \multicolumn{1}{c}{1} & \multicolumn{1}{c}{2} & \multicolumn{1}{c}{3} & \multicolumn{1}{c}{4} & \multicolumn{1}{c}{5} & \multicolumn{1}{c}{6} & \multicolumn{1}{c}{7} & \multicolumn{1}{c}{8} & \multicolumn{1}{c}{9} & \multicolumn{1}{c}{10} & \multicolumn{1}{c}{11} & \multicolumn{1}{c}{12} & \multicolumn{1}{c}{13} & \multicolumn{1}{c}{14} 
		\\\hline
		$ 1$ & $1.1$ & $2.4$ & $3.4$ & $5.0$ & $5.8$ & $6.9$ & $7.9$ & $9.0$ & $10.0$ & $11.1$ & $12.1$ & $13.1$ & $14.2$ & $15.2$ & $16.2$ \\
$ 2$ & $1.8$ & $3.1$ & $4.4$ & $5.8$ & $6.7$ & $7.9$ & $8.9$ & $10.0$ & $11.0$ & $12.1$ & $13.1$ & $14.1$ & $15.2$ & $16.2$ \\
$ 3$ & $2.2$ & $3.6$ & $5.2$ & $6.4$ & $7.6$ & $8.7$ & $9.8$ & $10.9$ & $11.9$ & $13.0$ & $14.0$ & $15.1$ & $16.1$ \\
$ 4$ & $2.5$ & $4.0$ & $5.8$ & $6.9$ & $8.3$ & $9.4$ & $10.6$ & $11.6$ & $12.8$ & $13.8$ & $14.9$ & $16.0$ \\
$ 5$ & $2.7$ & $4.3$ & $6.3$ & $7.4$ & $8.9$ & $10.0$ & $11.3$ & $12.4$ & $13.5$ & $14.6$ & $15.7$ \\
$ 6$ & $2.9$ & $4.6$ & $6.7$ & $7.9$ & $9.4$ & $10.6$ & $11.9$ & $13.1$ & $14.2$ & $15.4$ \\
$ 7$ & $3.0$ & $4.8$ & $7.1$ & $8.3$ & $9.9$ & $11.1$ & $12.4$ & $13.7$ & $14.9$ \\
$ 8$ & $3.2$ & $5.0$ & $7.4$ & $8.7$ & $10.3$ & $11.6$ & $13.0$ & $14.2$ \\
$ 9$ & $3.3$ & $5.2$ & $7.7$ & $9.0$ & $10.7$ & $12.1$ & $13.4$ \\
$10$ & $3.4$ & $5.4$ & $7.9$ & $9.3$ & $11.0$ & $12.5$ \\
$11$ & $3.5$ & $5.6$ & $8.1$ & $9.6$ & $11.4$ \\
$12$ & $3.6$ & $5.7$ & $8.4$ & $9.9$ \\
$13$ & $3.7$ & $5.9$ & $8.5$ \\
$14$ & $3.7$ & $6.0$ \\
$15$ & $3.8$
		\end{tabular}}
	\end{table}
\end{example}

\appendix
\renewcommand\thesection{Appendix~\Alph{section}}
\section{Computing coefficients of asymptotic expansion of tails}
\label{apa}
\renewcommand\thesection{\Alph{section}}
Here we show some formulae for coefficients $\tau_j$ of the expansion \eqref{eunexp} of tails $u_n$ for all considered subclasses of continued fractions (see~Table~\ref{Tab:Rozwazane_podklasy}).
All the proofs will basically follow the idea given by Wynn in \cite{Wynn59}. More precisely, we put the expansions~\eqref{easab} and \eqref{eunexp} into the recurrence relation~\eqref{oddtailsrel} in order to obtain the coefficients $\tau_j$.
After some rearrangement of terms, this leads to the following equation:
\begin{multline}
 	\label{E:u_n_rec_expansion}
	0=\sum_{m=-8}^{\infty}\!\sum_{i=-1}^{\lfloor\frac m2\rfloor+3}\,\sum_{j=-1}^{\lfloor\frac m2\rfloor+2-i} \negthickspace\negthickspace q_i'\,q_j\,\tau_{m-2i-2j}\,n^{-m/2}
	+\!\!\!\sum_{m=-10}^{\infty}\sum_{i=-1}^{\lfloor\frac m2\rfloor+4}\,\sum_{j=-4}^{m+4-2i} \negthickspace\negthickspace q_i'\,\tau_j\,\omega_{m-2i-j}\,n^{-m/2}+{}\\
	+\!\!\!\sum_{m=-8}^{\infty} \sum_{i=-2}^{\lfloor\frac m2\rfloor+2} (p_i\,\tau_{m-2i}-p_i'\,\omega_{m-2i})\,n^{-m/2}
	-\!\!\!\sum_{m=-3}^{\infty}\,\sum_{i=-2}^{m+1} p_i'\,q_{m-i}\,n^{-m},
\end{multline}
where $\omega_j$ are coefficients of the expansion of \( u_{n+1} = \sum_{j=-4}^\infty \omega_j\, n^{-j/2}. \)
One can check that
\begin{align*}
	\omega_{-4} &= \tau_{-4},\quad \omega_{-3}=\tau_{-3},\quad \omega_{-2}=2\tau_{-4}+\tau_{-2},\quad \omega_0=\tau_{-4}+\tau_{-2}+\tau_0,\\
	\omega_{2j-1}&=\tau_{-3}\binom{3/2}{j+1}+\tau_{-1}\binom{1/2}{j}+\tau_{1}\binom{-1/2}{j-1}+\ldots+\tau_{2j-1}\binom{-(2j-1)/2}{0}, \quad j\in\fN\cup\{0\},\\
	\omega_{2j}&=\tau_{2}\binom{-1}{j-1}+\tau_4\binom{-2}{j-2}+\ldots+\tau_{2j}\binom{-j}{0}, \quad j\in\fN,
\end{align*}
where $\tbinom{z}{m}$ means \ftime{generalised Newton's symbol}
\[
	\binom{z}{m}=(-1)^m\frac{(-z)(-z+1)\cdots(-z+m-1)}{m!},\qquad z\in\fC,\,m\in\fN\cup\{0\}.
\]
Let $c_m$ denote the coefficients of powers of $n^{-m/2}$
in~\eqref{E:u_n_rec_expansion} ($m=-10, -9, \ldots$);
the first few of them are as follows:
\begin{subequations}\label{E:rwm}
\label{E:tau:equations}
\begin{align}
	\label{E:m=-10}
		c_{-10} = {} & q'_{{-1}}\,\tau_{-4}^{2},\\
	\label{E:m=-9}
		c_{-9} = {} & 2\,q'_{{-1}}\,\tau_{{-4}}\,\tau_{{-3}},\\
	\label{E:m=-8}
		\begin{split}
		c_{-8} = {} & q'_{{-1}}\,q_{{-1}}\,\tau_{{-4}}+q'_{{-1}}\,\tau_{{-4}}
		\left( 2\,\tau_{{-4}}+\tau_{{-2}} \right)+\\
		&+q'_{{-1}}\,\tau_{-3}^{2}+q'_{{-1}}\,\tau_{{-2}}\,\tau_{{-4}}+q'_{{0}}\,\tau_{-4}^{2}+\tau_{{-4}}(p_{{-2}}-p'_{{-2}}),
		\end{split}
	\\
	\label{E:m=-7}
		\begin{split}
		c_{-7} ={}& \tau_{{-3}}\,q'_{{-1}}\,q_{{-1}}+q'_{{-1}}\,\tau_{{-4}}
		\left( \tau_{{-1}}+3\tau_{{-3}}/2 \right) +q'_{{-1}}\,\tau_{{-3}}
		\left( 2\,\tau_{{-4}}+\tau_{{-2}} \right)+\\
		&+\tau_{{-2}}\,q'_{{-1}}\,\tau_{{-3}}+
		\tau_{{-1}}\,q'_{{-1}}\,\tau_{{-4}}+2\,\tau_{{-4}}\,q'_{{0}}\,\tau_{{-3}}+\tau_{{-3}}\,p_{{-2}}-p'_{{-2}}\,\tau_{{-3}}.
		\end{split}
\end{align}
\end{subequations}
Equating them to zero gives the infinite system of equations satisfied by the
unknown coefficients $\tau_j$:
\begin{equation}\label{E:uklad}
	c_m=0, \qquad m=-10,-9, \ldots,
\end{equation}
Taking into account the forms of numerators $a_n$ and~denominators $b_n$
(see~\eqref{easab}), there easily follows that $\mu\leq 2$
if~CF$\not\in\mD_{20}$. Hence, the first few equations of~\eqref{E:uklad} are
obviously satisfied.
In Lemma~\ref{ld1020} we analyse this in the case of all considered subclasses
of CFs.
In Lemma \ref{lmueq}, we show the form of quadratic equation satisfied by the
beginning coefficient $\tau_\mu$.

\begin{lemma}\label{ld1020}
The following formulae hold:
\begin{enumerate}
\item For $\text{CF}\in\mDe_{10}$:
	\[ \tau_{-2} = 0, \qquad \tau_0 = \frac{2p'_{0}-2q'_{0}\,q_{0}+p_{-1}-2p_{0}}{4q'_{0}}; \]
\item In the case of $\mDn_{10}$:
	\[ \tau_{-1} = \tau_{1} = \tau_3 = \ldots = 0, \qquad 
	\tau_0 =
			\begin{cases}
				\frac{p'_{-1} q_0}{p_{-1}-p_{-1}'} & \text{if $\tau_{-2}=0$},\\
				{\frac{p_{{-1}}q_{{0}}}{p'_{{-1}}-p_{{-1}}}}+{\frac{p_{{-1}}+p'_{{0}}-p_{{0}}}{q'_{{0}}}}+{\frac{\left(p_{{-1}}-p'_{{-1}}\right) q'_1}{{q_{0}'}^2}}
				& \text{if $\tau_{-2}\neq 0$};
			\end{cases} \]
\item For $\text{CF}\in\mD_{11}$:
	\[ \tau_{-1} = \tau_{1} = \tau_3 = \ldots = 0, \qquad 
	\tau_0 = 
		\begin{cases}
			\frac{p'_{-1}}{q'_{-1}} & \text{if $\tau_{-2}=0$},\\
			q_{-1}-q_0 - \frac{p_{-1}}{q'_{-1}} & \text{if $\tau_{-2}\ne 0$};
		 \end{cases} \]
\item For $\text{CF}\in\mDe_{20}$:
	\[ \tau_{-4} = \tau_{-3} = 0; \]
\item For $\text{CF}\in\mDn_{20}$:
	\[ \tau_{-3} = \tau_{-1} = \tau_1 = \ldots = 0, \qquad 
	\tau_{-2} = 
		\begin{cases}
			0 & \text{if $\tau_{-4}=0$},\\
			\frac{2 p_{-2}+p'_{-1}-p_{-1}}{q'_0} + \frac{q'_1(p_{-2}-p'_{-2})}{{q'_0}^2}
			& \text{if $\tau_{-4}\ne0$};
		\end{cases} \]
\item For $\text{CF}\in\wmD_{21}$:
	\begin{multline*} \tau_{-3} = \tau_{-1} = \tau_1 = \ldots = 0, \\
	\tau_{0}=
			\frac{
			 p'_{{-2}}q_{{0}}+p'_{{-1}}q_{{-1}}
			 -\left(q'_{{-1}}q_{{0}}+q'_{{0}}q_{{-1}}-p'_{{-1}}+p_{{-1}}-p'_{{-2}}\right)\tau_{{-2}}
			 -\left(q'_{-1}+q'_{{0}}\right)\tau_{-2}^{2}
			 }{2\,q'_{{-1}}\tau_{{-2}}+p_{{-2}}-p'_{{-2}}+q'_{{-1}}q_{{-1}}}.
	\end{multline*}
\end{enumerate}
\end{lemma}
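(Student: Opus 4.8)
The plan is to carry out the scheme announced just above: insert the expansions \eqref{easab} and the ansatz \eqref{eunexp} into the bilinear relation \eqref{oddtailsrel}, collect like powers of $n^{-1/2}$ to obtain the system $c_m=0$ displayed in \eqref{E:u_n_rec_expansion}--\eqref{E:rwm}, and then solve it one coefficient at a time, separately for each subclass. Throughout I would exploit the degree constraints, which kill many of the $p_i,q_i$ (for instance $p_{-2}=p_{-2}'=0$ in $\mD_{10}$ and $\mD_{11}$, and $q_{-2}=q_{-1}=q_{-2}'=q_{-1}'=0$ in $\mD_{20}$); after these simplifications the leading equations collapse to short expressions.

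I would treat the top coefficients first, by direct computation. For $\mD_{20}$, equation \eqref{E:m=-8} reduces to $c_{-8}=\tau_{-4}\bigl(q_0'\tau_{-4}+p_{-2}-p_{-2}'\bigr)$, whose two roots $\tau_{-4}=0$ and $\tau_{-4}=(p_{-2}'-p_{-2})/q_0'$ are distinct in $\mDn_{20}$ (because $\abs{p_{-2}'}\neq\abs{p_{-2}}$) but coincide at $\tau_{-4}=0$ in $\mDe_{20}$ (because $p_{-2}=p_{-2}'$); this gives the vanishing in part~4. The same mechanism one step lower yields $\tau_{-2}=0$ for $\mDe_{10}$, where the defining equality $p_{-1}=p_{-1}'$ turns the $n^2$-equation $c_{-4}=0$ into $q_0'\tau_{-2}^2=0$. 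Once $\tau_{-4}=0$ is known for $\mDe_{20}$, the $n^3$-equation $c_{-6}=0$ collapses (again using $p_{-2}=p_{-2}'$) to $q_0'\tau_{-3}^2=0$, forcing $\tau_{-3}=0$ and completing part~4. These top computations fix the \emph{effective} leading order of $u_n$ in each subclass and expose the quadratic satisfied by the beginning coefficient $\tau_{-\mu}$, which I would carry as a parameter (its root being selected by the criterion of Theorem~\ref{tmu}).

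The blanket vanishing of the half-integer coefficients, $\tau_{-1}=\tau_1=\tau_3=\dots=0$, I would obtain by one structural observation rather than equation by equation. Since \eqref{oddtailsrel} is bilinear and the $p_i,q_i$ carry only integer powers of $n$, every monomial in \eqref{E:u_n_rec_expansion} contains at most two factors $\tau_j$, and a contribution to an \emph{odd}-indexed $c_m$ must involve an odd --- hence nonzero --- number of half-integer-index factors. Consequently, setting all half-integer coefficients to zero satisfies every odd-indexed equation identically, so the integer-power ansatz is consistent; that it is the correct branch follows from non-degeneracy, which the subclass inequalities of Table~\ref{Tab:Rozwazane_podklasy} are precisely designed to guarantee (the nonzero discriminant for $\mDe_{20}$, the magnitude conditions for $\mDn_{10}$ and $\mDn_{20}$, and the root-separation condition for $\wmD_{21}$ all make the beginning quadratic have a simple root, so each branch is uniquely continued). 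The lone exception is $\mDe_{10}$, whose beginning coefficient $\tau_{-1}$ already sits at a half-integer index; there the half-integer ladder genuinely survives, and I would instead read $\tau_{-1}$ off $c_{-2}=0$ and $\tau_0$ off $c_0=0$.

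Finally, with $\tau_{-\mu}$ treated as known, the explicit values of $\tau_0$ (and of $\tau_{-2}$ in $\mDn_{20}$) would be extracted by descending to the first equation $c_m=0$ in which the desired coefficient appears linearly, substituting the higher coefficients already found and, where convenient, the beginning quadratic, and dividing by the leading factor. In every case the case split on $\tau_{-\mu}=0$ versus $\tau_{-\mu}\neq0$ propagates verbatim to $\tau_0$, and the denominator one divides by is the derivative $2\alpha\tau_{-\mu}+\beta$ of the beginning quadratic, nonzero exactly because the selected root is simple --- this is what makes the formulae in parts~2, 3, 5 and 6 well defined. I expect the genuine obstacle to be the index bookkeeping in the quadratic-numerator subclasses $\mD_{20}$ and $\wmD_{21}$: there multiplication by $a_n\sim p_{-2}n^2$ shifts indices by four and couples non-adjacent coefficients, so one must determine carefully, subclass by subclass, which $c_m$ is the defining equation for each $\tau_j$ and verify that its leading factor does not vanish. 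The remaining manipulations are routine but lengthy, which is exactly why they are consigned to this appendix.
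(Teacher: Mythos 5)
Your proposal is correct in substance and, for the concrete leading computations, coincides with the paper's proof: the factorizations you give for $c_{-8}$ and $c_{-4}$ are exactly the paper's \eqref{E:D20:m=-8} and \eqref{E:D10:m=-4}, and the collapse of $c_{-6}$ to $q_0'\tau_{-3}^2=0$ after $\tau_{-4}=0$ is precisely how the paper finishes part 4. Where you genuinely depart from the paper is the ladder $\tau_{-1}=\tau_1=\tau_3=\cdots=0$: the paper proves it by induction, verifying that the equation of \eqref{E:uklad} with $m=2j-3$ (shown for $\mDn_{10}$) simplifies to $\tau_{2j-1}\left(2q_0'\tau_{-2}+p_{-1}-p_{-1}'\right)=0$, the bracket being nonzero on either branch of \eqref{E:D10:m=-4}; you instead argue by parity (every term of an odd-indexed $c_m$ carries exactly one odd-indexed factor, so the all-zero odd ansatz satisfies those equations identically) plus uniqueness of continuation of the chosen root. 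The parity observation is valid and arguably cleaner, since it disposes of all odd equations at once. What it buys you, however, is paid back in the uniqueness step: that step requires showing the coefficient of the top unknown in each $c_m$ is nonzero, which is the same bookkeeping as the paper's induction, and your blanket identification of that coefficient with $2\alpha\tau_{-\mu}+\beta$ is not true in general --- for $\mDe_{20}$ it is the $m$-dependent expression \eqref{e:coeff}, $2\tau_{-2}q_0'+p_{-1}-p_{-1}'+\tfrac m2 p_{-2}$, whose non-vanishing is the entire content of Lemma~\ref{ld20} and needs the sign choice of Theorem~\ref{tmu}, not just simplicity of the root. Fortunately, in the four subclasses where the present lemma asserts ladders ($\mDn_{10}$, $\mD_{11}$, $\mDn_{20}$, $\wmD_{21}$) the top coupling survives with constant coefficient equal to $2\alpha\tau_{-\mu}+\beta$, so your argument does close for this statement; but that verification must actually be carried out, as you yourself concede at the end. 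One small slip: for $\mDe_{10}$ the coefficient $\tau_0$ is read off $c_{-1}=0$ (the paper's equation $\tau_{-1}\left(2p_0'-4\tau_0 q_0'-2q_0'q_0-2p_0+p_{-1}\right)=0$, which follows \eqref{E:De10:m=-2}), not off $c_0=0$.
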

\begin{proof}
Each part of the lemma has to be considered separately.
For the subclass $\mD_{10}$, the equations~\eqref{E:uklad} for
$m=-10,-9,\ldots,-5$ are trivially satisfied. For $m=-4$, we obtain that
	\begin{equation}
 		\label{E:D10:m=-4}
		0=\tau_{-2}\left(q_0'\,\tau_{-2}-p'_{-1}+p_{-1}\right),
	\end{equation}
so there must be $\tau_{-2}=0$ in the case of $\mDe_{10}$. Then the equation
of~\eqref{E:uklad} with $m=-3$ is evidently satisfied, and those
corresponding to $m=-2, -1$ can be simplified to the form:
\begin{gather}
 		\label{E:De10:m=-2}
		q_0'\,\tau_{-1}^2-q_0\, p_{-1}=0,\\
		\nonumber
		\tau_{-1}\left(2\,p'_0-4\,\tau_0 q'_0-2\,q'_0 q_0-2\,p_0+p_{-1}\right)=0.
\end{gather}
The latter equation yields the result of the part 1.

Using similar analysis, one can prove the part 4.
Namely, in the case of class $\mD_{20}$, the equations \eqref{E:uklad} with
$m=-10$ and~$m=-9$ are obviously satisfied.
However, equations corresponding to $m=-8$ and~$m=-7$ can be written in the form
	\begin{align}
		\label{E:D20:m=-8}
		\tau_{-4}\left(q'_{0}\,\tau_{-4}+p_{-2}-p'_{-2}\right) &= 0,\\
		\label{E:D20:m=-7}
		\tau_{-3}\left(2 q'_{0}\,\tau_{-4}+p_{-2}-p'_{-2}\right) &=0.
	\end{align}
Hence, $\tau_{-4}=0$ in the case of $\mDe_{20}$.
Then equation~\eqref{E:D20:m=-7} is trivially satisfied, and the next equation
of \eqref{E:uklad}, related to $m=-6$, can be written as
\[ q_0'\,\tau_{-3}^2 = 0. \]
This proves the result of the part 4.

The proofs of the remaining parts have a~lot in common, so that we
show only the proof of the part 2.
Equation of~\eqref{E:uklad}, related to $m=-3$, can be simplified to the form
\[
	\tau_{{-1}} \left( 2\,q'_{{0}}\tau_{{-2}}+p_{{-1}}-p'_{{-1}} \right)=0.
\]
Comparing this with \eqref{E:D10:m=-4} implies that $\tau_{-1}=0$, regardless of
whether $\tau_{-2}=0$ or not.
Formula for $\tau_0$ follows from the equation of the system~\eqref{E:uklad}
with $m=-2$. Next, using induction on $j\geq 0$, one verifies that
the equation of~\eqref{E:uklad}, related to $m=2j-3$, can be simplified to the
form
\[
	\tau_{{2j-1}} \left( 2\,q'_{{0}}\tau_{{-2}}+p_{{-1}}-p'_{{-1}} \right)=0.
\]
Hence, all the coefficients $\tau_{1}, \tau_{3}, \ldots$ vanish.
\end{proof}

\begin{lemma}\label{lmueq}
The beginning coefficient $\tau_{-\mu}$ of~expansion \eqref{eunexp} satisfies the quadratic equation
\begin{equation}
	\label{etaueq}
	\alpha\,\tau^2+\beta\,\tau+\gamma=0,
\end{equation}
where $\alpha,\beta,\gamma$ are given in Table~\ref{tab:alpha}.
\end{lemma}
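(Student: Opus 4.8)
The plan is to follow the Wynn-type scheme used throughout~\ref{apa}: I would substitute the expansions~\eqref{easab} and~\eqref{eunexp} into the bilinear relation~\eqref{oddtailsrel}, collect the coefficient $c_m$ of each power $n^{-m/2}$, and impose $c_m=0$ as in~\eqref{E:uklad}. For each subclass, after discarding the higher-order coefficients that already vanish by Lemma~\ref{ld1020}, exactly one of these equations reduces to a quadratic in the beginning coefficient $\tau_{-\mu}$; it then remains to read off $\alpha,\beta,\gamma$ and compare with Table~\ref{tab:alpha}.

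Three of the six cases will require no further computation, since the relevant equation was already isolated in the proof of Lemma~\ref{ld1020}. For $\mDe_{10}$ the equation~\eqref{E:De10:m=-2} is $q_0'\tau_{-1}^2-q_0\,p_{-1}=0$, giving $\alpha=q_0'$, $\beta=0$, $\gamma=-q_0\,p_{-1}$; for $\mDn_{10}$ the equation~\eqref{E:D10:m=-4} is $q_0'\tau_{-2}^2+(p_{-1}-p_{-1}')\tau_{-2}=0$; and for $\mDn_{20}$ the equation~\eqref{E:D20:m=-8} is $q_0'\tau_{-4}^2+(p_{-2}-p_{-2}')\tau_{-4}=0$. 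In each of these $\gamma=0$ and the tabulated $\alpha,\beta$ are immediate.

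For $\mD_{11}$ and $\wmD_{21}$ I would take the quadratic to be the leading ($n^3$, i.e.\ $c_{-6}$) coefficient of~\eqref{oddtailsrel}. Retaining only the leading monomials $a\sim p_{-2}n^2$ (resp.\ $p_{-1}n$ when $\deg a=1$), $b\sim q_{-1}n$, $b'\sim q_{-1}'n$ and $X_n\sim\tau_{-2}n$, the $n^3$-parts of the four summands collapse to
\[
	q_{-1}'\tau_{-2}^2+(p_{-2}-p_{-2}'+q_{-1}\,q_{-1}')\tau_{-2}-q_{-1}\,p_{-2}'=0
\]
for $\wmD_{21}$; the same expression with $p_{-2}=p_{-2}'=0$ yields the $\mD_{11}$ case, whence $\gamma=0$. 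It then remains to note that clearing denominators in the defining relation $x=p_{-2}'/(q_{-1}'+p_{-2}/(q_{-1}+x))$ of $\wmD_{21}$ produces exactly this quadratic, so its roots $\{x_0,x_1\}$ are the candidate values of $\tau_{-2}$ (cf.\ Theorem~\ref{tmu}, part~\ref{item:D21:tau}).

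The delicate case --- and the one I expect to be the main obstacle --- is $\mDe_{20}$. A priori the expansion could begin at order $n^2$, but Lemma~\ref{ld1020} gives $\tau_{-4}=\tau_{-3}=0$, so the beginning coefficient is $\tau_{-2}$. Here the defining equality $p_{-2}=p_{-2}'$ forces the $n^3$- and $n^{5/2}$-coefficients of~\eqref{oddtailsrel}, namely $(p_{-2}-p_{-2}')\tau_{-2}$ and $(p_{-2}-p_{-2}')\tau_{-1}$, to vanish identically, so no quadratic appears at leading order and one must descend to the $n^2$-coefficient $c_{-4}$. The hard part will be the bookkeeping at this order: I would check that the half-integer coefficient $\tau_{-1}$ enters~\eqref{oddtailsrel} only at orders $n^{5/2}$ and below and hence leaves $c_{-4}$ untouched, and that the unwanted coefficient $\tau_0$ enters $c_{-4}$ through $p_{-2}\tau_0$ from the first summand and $-p_{-2}'\tau_0$ from the third and therefore cancels. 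What survives is
\[
	q_0'\tau_{-2}^2+(p_{-1}-p_{-1}'-p_{-2})\tau_{-2}-q_0\,p_{-2}=0,
\]
in agreement with Table~\ref{tab:alpha}. Once the correct index $m$ is identified for each subclass and this single cancellation is carried out, the remaining cases are routine.
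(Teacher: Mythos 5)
Your proposal is correct and follows essentially the same route as the paper's proof: the paper likewise reads off the quadratic from the system \eqref{E:uklad}, citing \eqref{E:De10:m=-2}, \eqref{E:D10:m=-4} and \eqref{E:D20:m=-8} for $\mDe_{10}$, $\mDn_{10}$, $\mDn_{20}$, taking the $m=-6$ equation for $\mD_{11}$ and $\wmD_{21}$ (your $n^3$-coefficient computation reproduces \eqref{E:D21:m=-6} exactly), and descending to $m=-4$ for $\mDe_{20}$ after noting that $m=-5$ is trivially satisfied. Your explicit cancellation of the $\tau_0$-terms via $p_{-2}=p_{-2}'$ in the $\mDe_{20}$ case is exactly the computation the paper leaves implicit.
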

\begin{proof}
The proof uses Lemma~\ref{ld1020} and the arguments in its proof.
In the cases $\mDn_{10}$ and~$\mDe_{10}$ the coefficients $\alpha, \beta, \gamma$ result from~\eqref{E:D10:m=-4} and~\eqref{E:De10:m=-2}, respectively.
Considering the subclass $\mD_{11}$, the equation~\eqref{E:uklad} for
$m=-10,-9,-8,-7$ (cf.~\eqref{E:m=-10}--\eqref{E:m=-7}) holds trivially, while
for $m=-6$ can be written in the form
\begin{equation*}
	\tau_{-2}\,q'_{-1}\left(\tau_{-2}+q_{-1}\right)=0.
\end{equation*}
In the case of subclass $\mDe_{20}$, using Lemma~\ref{ld1020}, we have that
the equation of~\eqref{E:uklad} with $m=-5$ is trivially satisfied.
The next equation, related to $m=-4$, is of the form
\[
	q'_{0}\,\tau_{-2}^2+\left(p_{-1}-p'_{-1}-p_{-2}\right)\tau_{-2}-p_{-2}\,q_{0}=0.
\]
In the case of $\mDn_{20}$, the coefficients $\alpha, \beta, \gamma$
easily result from~\eqref{E:D20:m=-8}.
Finally, in the case of $\wmD_{21}$, the equations of the system~\eqref{E:uklad}
related to $m=-10,-9,-8,-7$ (cf.~\eqref{E:m=-10}--\eqref{E:m=-7}) are
obviously satisfied, and the one corresponding to $m=-6$ can be written in the
form
\begin{equation}
	\label{E:D21:m=-6}
	q'_{-1}\,\tau_{-2}^2+\left(p_{-2}-p'_{-2}+q'_{-1}\,q_{-1}\right)\tau_{-2}-p'_{-2}\,q_{-1} = 0.
\end{equation}
\end{proof}

The Lemma \ref{ld20} extends the results given in the part 4
of~Lemma~\ref{ld1020}, for the subclass $\mDe_{20}$. Notice that Lemma
\ref{ld20} follows by application of Theorem~\ref{tmu} which is proved using
Lemma \ref{ld1020}.
\begin{lemma}\label{ld20}
	In the case of $\mDe_{20}$, we have
	\begin{equation*} \tau_{-1} = \tau_{1} = \tau_3 = \ldots = 0, \qquad
			\tau_{0}=
			\frac{%
				p'_{-1}\,q_{0}+p_{-2}\,q_{1}
				-\left(q'_{1}+q'_{0}\right)\tau_{-2}^{2}+
				\left(p'_{-1}+p'_{0}-p_{0}-q'_{0}\,q_{0}\right)\tau_{-2}%
			}{%
				2\,\tau_{-2}q'_{0}+p_{-1}-p'_{-1}
			}.
	\end{equation*}
\end{lemma}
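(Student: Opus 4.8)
The plan is to continue substituting the expansions~\eqref{easab} and~\eqref{eunexp} into~\eqref{oddtailsrel}, i.e.\ to read off further equations of the system~\eqref{E:uklad}, starting exactly where the proof of part~4 of Lemma~\ref{ld1020} stopped. From that part we already have $\tau_{-4}=\tau_{-3}=0$, and from Lemma~\ref{lmueq} the coefficient $\tau_{-2}$ is the root of $q_0'\tau^2+(p_{-1}-p_{-1}'-p_{-2})\tau-p_{-2}q_0=0$ selected in part~4 of Theorem~\ref{tmu}. Since $\tau_{-4}=\tau_{-3}=0$, the half-integer powers in~\eqref{eunexp} begin with $\tau_{-1}n^{1/2}$, and I would first record the parity observation that in~\eqref{E:u_n_rec_expansion} every equation $c_m=0$ with $m$ odd is homogeneous and linear in the odd-indexed coefficients $\tau_{-1},\tau_1,\tau_3,\dots$ (a half-integer power arises only as an odd-indexed $\tau$ times integer-powered data or a second, even-indexed, factor), whereas the even equations involve only even-indexed coefficients.

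Next I would extract $c_{-3}=0$ and $c_{-2}=0$. Using $p_{-2}=p_{-2}'$, the $\tau_1$-contributions coming from $a_{n+1}X_n$ and $-a_n'X_{n+1}$ cancel, so $c_{-3}=0$ collapses to $\tau_{-1}F_0=0$ with $F_0=2q_0'\tau_{-2}+p_{-1}-p_{-1}'-\tfrac12 p_{-2}$; similarly the $\tau_2$-terms in $c_{-2}=0$ cancel, leaving an inhomogeneous linear equation for $\tau_0$ with coefficient $D=2q_0'\tau_{-2}+p_{-1}-p_{-1}'$, and collecting the remaining terms (again turning $p_{-2}'q_1$ into $p_{-2}q_1$ via $p_{-2}=p_{-2}'$) reproduces the asserted formula for $\tau_0$. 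The vanishing of all odd coefficients then follows by induction on $j\geq0$: the equation $c_{2j-3}=0$ reads $\tau_{2j-1}F_j+(\text{lower odd }\tau)=0$ with $F_j=2q_0'\tau_{-2}+p_{-1}-p_{-1}'+\tfrac{2j-1}{2}p_{-2}$, the coefficient $p_{-2}\tau_{2j+1}$ of the next odd coefficient again cancelling against $-p_{-2}'\tau_{2j+1}$, so that once $\tau_{-3}=\tau_{-1}=\dots=\tau_{2j-3}=0$ we obtain $\tau_{2j-1}=0$ as soon as $F_j\neq0$.

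The crux, and the step I expect to be delicate, is the non-vanishing of the diagonal factors $D$ and $F_j$; this is precisely where part~4 of Theorem~\ref{tmu} is needed. With $\Delta=\beta^2-4\alpha\gamma$ and $\alpha=q_0'$, $\beta=p_{-1}-p_{-1}'-p_{-2}$, $\gamma=-p_{-2}q_0$ as in Table~\ref{tab:alpha}, the selected root $\tau_{-2}=\frac{-\beta+s\sqrt{\Delta}}{2\alpha}$ with $s=\sgn\!\big(\Re\tfrac{\sqrt{\Delta}}{p_{-2}}\big)$ yields $2q_0'\tau_{-2}+p_{-1}-p_{-1}'=p_{-2}+s\sqrt{\Delta}$, hence $D=p_{-2}+s\sqrt{\Delta}$ and $F_j=\tfrac{2j+1}{2}p_{-2}+s\sqrt{\Delta}$. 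The defining condition $\Delta/p_{-2}^2\notin(-\infty,0]$ of $\mDe_{20}$ forces $\Re\tfrac{\sqrt{\Delta}}{p_{-2}}\neq0$, so $s$ is well defined and $s\,\Re\tfrac{\sqrt{\Delta}}{p_{-2}}=\bigl|\Re\tfrac{\sqrt{\Delta}}{p_{-2}}\bigr|\geq0$; therefore $\Re\tfrac{D}{p_{-2}}\geq1$ and $\Re\tfrac{F_j}{p_{-2}}\geq\tfrac{2j+1}{2}>0$ for all $j\geq0$, whence $D\neq0$ and $F_j\neq0$. This both legitimises the division producing $\tau_0$ and drives the induction. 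Beyond this sign analysis the argument is only the bookkeeping of the finitely many low-order coefficients $\omega_j$ of $u_{n+1}$ entering $c_{-3}$ and $c_{-2}$, which is routine.
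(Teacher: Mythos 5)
Your proposal is correct and takes essentially the same route as the paper's proof: the same equations $c_{-3}=0$ and $c_{-2}=0$ extracted from the system~\eqref{E:uklad}, the same induction on the odd-indexed coefficients with diagonal factor $2\tau_{-2}q_0'+p_{-1}-p_{-1}'+\tfrac m2 p_{-2}$, and the same appeal to the root selection of Theorem~\ref{tmu} together with the defining condition of $\mDe_{20}$ to rule out vanishing of that factor. The only (cosmetic) difference is at the last step: you prove non-vanishing directly, via $\Re\bigl(F_j/p_{-2}\bigr)\geq\tfrac{2j+1}{2}+\bigl|\Re\tfrac{\sqrt{\beta^2-4\alpha\gamma}}{p_{-2}}\bigr|>0$, while the paper argues by contradiction (a vanishing factor would force $\varkappa=\tfrac{\sqrt{\beta^2-4\alpha\gamma}}{p_{-2}}$ to be real with $\tfrac12\geq 1+\abs{\varkappa}$); both rest on the same identity $2\tau_{-2}q_0'+p_{-1}-p_{-1}'=p_{-2}+\sgn\left(\Re\varkappa\right)\sqrt{\beta^2-4\alpha\gamma}$.
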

\begin{proof}
	We continue the strategy of the previous proof. The equations~of
	the system~\eqref{E:uklad} corresponding to $m=-3$ and $m=-2$ simplify to
	\begin{gather*}
		\tau_{-1}\left(2\tau_{-2}q'_{0}+p_{-1}-p'_{-1}-\frac12 p_{-2}\right) = 0,\\
		\shortintertext{and}
		\tau_{{0}}\left(2\,\tau_{{-2}}q'_{{0}}+p_{{-1}}-p'_{{-1}}\right)=
		p'_{{-1}}q_{{0}}+p_{{-2}}q_{{1}}-\tau_{-1}^{2}q'_{{0}}
		-\left(q'_{{1}}+q'_{{0}}\right)\tau_{-2}^{2}+
		\left(p'_{{-1}}+p'_{{0}}-p_{{0}}-q'_{0}q_{0}\right)\tau_{-2},
	\end{gather*}
	respectively.
	The latter equation yields the formula for $\tau_0$.
	One can check that the coefficient of $\tau_{m}$ in the equation of the
	system~\eqref{E:uklad} related to $m>-2$ is given by
	\begin{equation}\label{e:coeff}
		2\tau_{-2}q'_{0}+p_{-1}-p'_{-1}+\frac{m} 2 p_{-2}.
	\end{equation}
	Moreover, using the induction on $j\in\fN_0$, one may verify that the equation of~\eqref{E:uklad}, corresponding to $m=2j-1$, can be simplified to the form
	\begin{equation*}
		\tau_{2j-1}\left(2\tau_{-2}q'_{0}+p_{-1}-p'_{-1}+\frac{2j-1}2 p_{-2}\right) = 0.
	\end{equation*}
	To complete the proof, we show that expression \eqref{e:coeff}
	cannot vanish for any integer $m>-2$.
	Namely, suppose that
	\[ 2\tau_{-2}\,q'_{0}+p_{-1}-p'_{-1}+\frac{m} 2 p_{-2} = 0 \]
	for some $m\in\fN\cup\{-1,0\}$.
	Then we have
	\begin{equation}
		\label{E:j/2:nierownosc:1}
		\frac12 \geq -\frac{m}{2} = \frac{2\tau_{-2}q'_{0}+p_{-1}-p'_{-1}}{p_{-2}}.
	\end{equation}
	Let us denote
	\[
		\varkappa := \frac{\sqrt{\beta^2-4\alpha\,\gamma}}{p_{-2}},
	\]
	where $\alpha,\beta,\gamma$ are given in the Table~\ref{tab:alpha}
	(cf.~the class $\mDe_{20}$).
	From Theorem~\ref{tmu}, we derive that
	\[
		2\tau_{-2}\,q'_{0}+p_{-1}-p'_{-1} = p_{-2} + \sgn\left(\Re\varkappa\right)\sqrt{\beta^2-4\alpha\gamma}
	\]
	(cf.~\eqref{E:De20:tau_-2}), and hence the
	inequality~\eqref{E:j/2:nierownosc:1} can be written in the form
	\[
		\frac{1}{2} \geq -\frac{m}{2} = 1+\sgn\left(\Re\varkappa\right) \varkappa.
	\]
	We conclude that $\varkappa$ is a~real number satisfying $\tfrac12 \geq 1+\abs{\varkappa} \geq 1$,
	which is a~contradiction.
\end{proof}
\bibliographystyle{abbrv}
\bibliography{bibliography}
\end{document}